\title[Kan Complexes, Homotopy and Cohomology]
\author[J. STEINEBRUNNER]{Jan \sn{Steinebrunner}}
\newcommand{\N}{\mathbb{N}}
\newcommand{\Z}{\mathbb{Z}}
\newcommand{\R}{\mathbb{R}}
\newcommand{\Top}{{\mathcal{T}op}}
\newcommand{\CW}{\mathcal{CW}}
\newcommand{\Set}{{\mathcal{S}et}}
\newcommand{\sSet}{{\textbf{s}\mathcal{S}et}}
\newcommand{\sSk}{{\textbf{s}\mathcal{S}k}}
\newcommand{\sPair}{{\textbf{s}\mathcal{P}air}}
\newcommand{\Ab}{{\mathcal{A}b}}
\newcommand{\Kan}{{\mathfrak{Kan}}}
\newcommand{\vD}{\varDelta}
\newcommand{\p}{\partial}
\newcommand{\pt}{\cdot}
\newcommand{\inj}{\hookrightarrow}
\newcommand{\op}[1]{\operatorname{#1}}
\newcommand{\image}{\op{im}}
\newcommand{\bel}{\;?\;}
\newcommand{\s}{\star}
\newcommand{\de}{\mathbf{d}}
\newcommand{\ga}{\alpha}              
\newcommand{\gb}{\beta}
\newcommand{\gi}{\iota}
\newcommand{\gp}{\varphi}
\newcommand{\gO}{\Omega}
\begin{document}

\begin{center}
\large
 \textsc{Kan Complexes, Homotopy and Cohomology}
 
\normalsize
 by \textsc{Jan Steinebrunner}
 \bigskip
\end{center}

\begin{abstract}
	This article shows several new methods for proofs on Kan complexes
	while using them to give a compact introduction to the homotopy groups of these complexes.
	Then more advanced objects are studied starting with homology and the Hurewicz homomorphism. 
	Eilenberg-Mac Lane-spaces are constructed explicitly and can then be used to define spectral cohomology. 
	In the end the equivalence to the usual simplicial cohomology is shown.
\end{abstract}
\tableofcontents


\section*{Motivation}\label{sec:motiv}
	Kan complexes are combinatorial objects similar to simplicial complexes with an additional 
	property making it possible to define a notion of homotopy, homotopy groups and many other common
	constructions in topology. 
	The theory of Kan complexes requires almost no previous knowledge.
	Yet, most of the literature on this topic is aimed at advanced mathematicians, 
	who are already familiar with algebraic topology.
	
	This article will make the theory accessible to non-experts and undergraduate students. 
	The main obstacle, however, is that many of the fundamental proofs are indeed elementary, 
	though often, due to an involved combinatorial structure, quite confusing. 
	Thus we will give several methods for simplifying and structuring such arguments.
	In the first part they will be used to give a compact and basic introduction into combinatorial homotopy groups.
	The second part deals with (co)homology and applies the techniques to
	give elementary proofs for known statements such as Hurewicz's theorem
	or the equivalence of spectral and simplicial cohomology in the category of simplicial sets.\\
	
	After giving all basic definitions in section \ref{sec:basics}, 
	we define cycles and fillings. These geometrical concepts help to make arguments more intuitive.
	In section \ref{sec:topology} we will get a short insight in how to 'identify' Kan complexes with topological spaces.
		\footnote{This requires the reader to know some basics of topology, but it is only meant to improve geometrical intuition 
		and to show applications. Thus, it is helpful but not necessary for the rest of this article.}
	Then a definition of homotopy can be given, fitting the intuition gained by this identification. 
	In the Key lemma we can prove an equivalent characterization of homotopy, 
	which is more useful to work with.
	This makes it easy to introduce homotopy groups and the exact sequence of a pair
	quickly without mentioning much combinatorics.

	Part two starts in section \ref{sec:homology} by introducing simplicial (co)homology. 
	This then leads to an elementary proof of Hurewicz's theorem in a special case.
	Using a 'trivial completion' we will be able to construct Eilenberg-Mac Lane spaces explicitly.
	These spaces then have to be isomorphic to those obtained via the Dold-Kan correspondence.
	Simplical maps to them define the spectral cohomology, by this we mean the cohomology theory represented by a spectrum.
	We get an elementary identification of the cocycles and coboudaries in the spectral setting
	with those of simplicial one. This yields a natural isomorphism of the cohomology theories on
	the simplicial sets. 
	A statement, which we do just mention, but not prove here, can then be used to transfer these results to the topological category.

\section{Basic constructions}\label{sec:basics}
	
	We shortly set up the basic definitions. A more detailed introduction can, for example, be found in \cite{May} or \cite{GJ}.
	Simplicial sets can be understood as a generalization of simplicial complexes. 
	There is a set of '$n$-simplices' for each dimension $n\in \N_0$ 
	and so-called face operators are used to keep track of which faces each simplex has.
	The best way to understand the degeneracy operators is to regard
	them in the special case of example \ref{def:stsimplex}.
	
\begin{definition}[Simplicial set]\label{def:simset}
	A simplicial set $K_\pt$ is a sequence of disjoint sets $K_n$ for $n\in \N_0$ together with maps 
	\[
		d_i = d_{i,n}\: K_n \to K_{n-1} \; \text{ as well as } \; s_i = s_{i,n} \: K_n \to K_{n+1}, 
	\] 
	called face and degeneracy operators for $i\le n \in \N_0$, such that for all $j\le n\in \N_0$:
	\begin{align}
		d_i d_j = d_{j-1} d_i				\quad& \text{ if } 	i<j	\\
		s_i s_j = s_{j+1} s_i 				\quad& \text{ if } 	i\le j	\\
		d_i s_j = s_{j-1} d_i 				\quad& \text{ if } 	i<j	\\
		d_i s_i = id_{K_n} = d_{i+1} s_i 		&				\\
		d_i s_j = s_j d_{i-1} 				\quad& \text{ if } 	i>j+1.
	\end{align}
\end{definition}

\begin{definition}
	We denote the $(n+1)$-tuple of faces of $x$ by $\p x := (d_0x,\dots,d_nx)$.
	A subcomplex $L_\pt$ of a simplicial set $K_\pt$ is a sequence of subsets $L_n \subset K_n$
	forming a simplicial set with the restricted face and degeneracy operators.
	A simplicial Pair $(K_\pt,L_\pt)$ is a simplicial set $K_\pt$ with a distinguished subcomplex $L_\pt \subset K_\pt$.
	A pointed simplicial set $(K_\pt,\s)$ is a simplicial set with a distinguished base point $\s \in K_0$.
	A pointed simplicial $(K_\pt,L_\pt,\s)$ Pair has its base point in the distinguished subcomplex: $\s \in L_0$.
	
	A simplex of the form $y=s_i x$ is called degenerate.
\end{definition}
	
\begin{definition}
	A map $f$ between simplicial sets $K_\pt,L_\pt$ is a sequence of maps $f_n\:K_n\to L_n$ compatible
	with face and degeneracy operators: $f_nd_i=d_if_n$ and $ f_ns_i=s_if_n.$ Maps of pointed simplicial sets
	or simplicial Pairs $f\:(K_\pt,L_\pt)\to (K_\pt',L_\pt')$ should preserve this extra structure, 
	so $f(\s)=\s$ and $f(L_\pt) \subset L_\pt'$.
	
	We get categories of simplical sets $\sSet$, pointed simplicial sets $\sSet_+$,
	simplicial pairs $\sPair$ and pointed simplicial pairs $\sPair_+$.
\end{definition}
	
\begin{definition}\label{def:stsimplex}
	Let $[n]$ denote the totally ordered set $\left(\{0,\dots,n\},\ge\right)$ for any $n\in \N_0$. 
	Then the standard $n$-simplex $\vD_\pt^n$ is the simplicial set consisting of order preserving maps:
	\[
		\vD_k^n := \{x:[k]\to[n]\;|\;x\text{ non-decreasing}\} = \{ (x_0,\dots,x_k) \in [n]^{k+1} \;|\; x_0 \le \dots \le x_k\}.
	\]
	With the face and degeneracy operators given by omitting and repeating entries:
	\[
		d_i(x_0,\dots,x_n) = (x_0,\dots,x_{i-1},x_{i+1},\dots) \text{ and } s_i(x_0,\dots,x_n) = (x_0,\dots,x_{i},x_{i},\dots).
	\]
	This simplicial set has two interesting subcomplexes: $\p \vD_\pt$ contains all simplices $x:[k]\to[n]$,
	such that $X$ is not onto. The $(n,k)$-horn $\Lambda_\pt^n$ contains all simplices,
	for which $\operatorname{Im}(x)=x([k])$ does not contain the set $[n]\setminus \{k\} = \{0,\dots,k-1,k+1,\dots,n\}$.
\end{definition}
	
\begin{remarque} 
	$\vD^n$ is the generic example of a simplicial set, the rules for the operators are clearly satisfied. 
	One can think of $\vD^n$ as the convex hull of $n+1$ affine-linearly independent points $p_0, \dots, p_n\in \R^n$.
	Then a simplex $x=(x_0,\dots,x_k)$ is the convex hull of the points $p_{x_0},\dots,p_{x_k}$. 
	In this picture $d_i$ and $s_i$ really do what one expects.
	Then $\p\vD_\pt^n$ is the boundary of $\vD_\pt^n$ and $\Lambda_k^n$ is $\p\vD_\pt^n$ without the $k$th face.
	Here a degenerate simplex is a simplex for which its 'geometrical' dimension is lower than 'combinatorial' dimension,
	these are exactly those simplices in which an entry occurs twice.
\end{remarque}
	
\begin{remarque} 
	A base point $\s \in K_\pt$ generates a subcomplex $S_\pt \subset K_\pt$. It is easily seen that this subcomplex 
	has only one simplex in each dimension, which we will denote by $\s_n$ or just $\s$, we even call $S_\pt$ only $\s$
	whenever this does not lead to confusion
\end{remarque}

\begin{definition}
	For two simplical sets $K_\pt$ and $L_\pt$ their product is defined by $(K\times L)_n:=K_n \times L_n$,
	with the face and degeneracy operators: 
	\[
		d_i(x,y) = (d_i x, d_i y) \text{ and } s_i(x,y) = (s_i x, s_i y) \;\;\text{ for } (x,y)\in K_n\times L_n \text{ and }i\in [n].
	\]

	The coproduct or disjoint union is given by $(K \amalg L)_n := K_n \amalg L_n$
	with operators such that $K_\pt,L_\pt$ are subcomplexes.
	Arbitrary (co-)products are defined in a similar manner.
\end{definition}

\section{The Kan Property}\label{sec:kan}

	We introduce a notion of cycles and boundaries. We will see, for example in lemma \ref{lem:matrix}, 
	that their properties fit the topological intuition.

\begin{definition}
	An $(n+2)$ tuple $(x_0,\dots,x_{n+1}) \in (K_{n})^{n+2}$ of $n$-simplices in a simplicial set 
	is called $n$-\emph{cycle} if it is \emph{compatible} in the sense that it satisfies $d_i x_j = d_{j-1} x_i$ for all $i<j\in [n+1]$.
	It is an $n$-\emph{boundary} if there is an $(n+1)$-simplex $x$ such that $\p x = (x_0,\dots,x_{n+1})$.
		\footnote{Cycles, boundaries and horns are all enumerated by the dimension of the simplices, which they contain. 
			Thus an $n$-boundary is filled by an $(n+1)$ simplex.}
	
	An $(n+2)$ tuple $(x_0,\dots,x_{k-1},\bel,x_{k+1},\dots,x_{n+1})$, where the $k$th simplex is not 
	specified yet, is called $(n,k)$-horn, if it is compatible in the above sense.
	An $n$-simplex $x_k$ completing the horn to a boundary $(x_0,\dots,x_{n+1})$ is called \emph{completion}
	of the horn. An $(n+1)$-simplex filling the corresponding boundary is called a \emph{filling} of the horn.
	%
\end{definition}

\begin{remarque}
	Every boundary is a cycle, because $d_id_jx=d_{j-1}d_ix$. There is a one-to-one correspondence 
	between the $n$ simplices of $K_n$ of $K_\pt$ and the simplicial maps $\vD^n\to K_\pt$. 
	Similarly the $n$-cycles are one-to-one to the simplicial maps $\p\vD^{n+1} \to K_\pt$
	and the $(n,k)$-horns correspond to the maps $\Lambda_k^{n+1}\to K_\pt$.
\end{remarque}

\begin{definition}
	A simplicial set $K_\pt$ is called Kan complex if every horn has a filling.
\end{definition}
	
	One can think of the above definitions as indicated in the following picture. 
	For the moment this should just provide some intuition,
	but using the definition from section \ref{sec:topology} this becomes meaningful,
	as it can be interpreted as simplices in $S_\pt(\R^2\setminus \{\text{holes}\})$.
	
	\begfig{0.0cm}
	\center \includegraphics[width=0.8\textwidth]{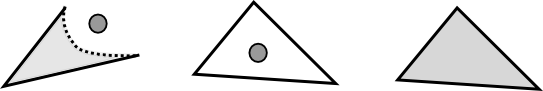}
	\Figure1{A $(1,2)$-horn with dashed completion and grey filling, a $1$-cycle that cannot be filled 
		 and a $1$ boundary with filling in $S_\pt(\R^2\setminus \{\text{holes}\})$}
	\endfig

	The next two ideas follow from the Kan property and are very important for this article, 
	we will see a lot of applications in the sections \ref{sec:homotopy} to \ref{sec:exact}.
	
\begin{definition}\label{def:filler}
	For an $(n,k)$-horn $(x_0,\dots,\bel,\dots,x_{n+1})$ in a Kan complex we write 
	\[
		(x_0,\dots,x_{k-1},\underline{y},x_{k+1},\dots,x_{n+1})
	\]
	and mean that $y$ is defined to be some simplex that completes the horn.
\end{definition}
	
\begin{definition}
	An $(n+2)$ tuple of $(n-1)$-cycles $(c^0,\dots,c^{n+1}),\; c^i=(x_0^i,\dots,x_n^i)$ is called compatible 
	if it satisfies $x_j^i = x_i^{j-1}$ for all $i<j\in [n+1]$. That is that the matrix $((c^0)^T,\dots,(c^{n+1})^T)$
	has the following shifted mirror symmetry:
	\[
		\left(\color{gray}
		\begin{matrix} 
			x_0^0 	\\ x_1^0 \\   \vdots \\  x_{n}^0	\\ x_{n+1}^0
		\end{matrix}\quad
		\begin{matrix}
			x_0^0	\\ \color{black} x_1^1 \\  \color{black} \vdots \\  \color{black}x_{n}^1\\ \color{black}x_{n+1}^1
		\end{matrix}\quad
		\begin{matrix}
			x_1^0	\\ \color{black}  x_1^1 \\   \vdots \\  x_{n}^2	\\ x_{n+1}^2
		\end{matrix}\quad
		\dots
		\begin{matrix}
			x_{n-1}^0\\ \color{black}  x_{n-1}^1    \\   \vdots \\  \color{black}x_{n}^{n}	\\\color{black} x_{n+1}^{n}
		\end{matrix}\quad
		\begin{matrix}
			x_n^0\\ \color{black}  x_n^1 \\   \vdots \\  \color{black}x_{n}^{n}	\\ x_{n+1}^{n+1}
		\end{matrix}\quad
		\begin{matrix}
			x_{n+1}^0\\ \color{black}  x_{n+1}^1 \\   \vdots \\  \color{black}x_{n+1}^{n}	\\ x_{n+1}^{n+1}
		\end{matrix}
		\color{black}
		\right).
	\]
\end{definition}

\begin{lemma}\label{lem:matrix}
	If $(c^0,\dots,c^{n+1})$ are compatible cycles in a Kan complex and all $c_i$ but $c_k$ are known to be boundaries, 
	then $c_k$ is a boundary as well.
\end{lemma}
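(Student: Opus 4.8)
The plan is to move the whole situation up by one dimension. Using the hypothesis, choose for every $i\neq k$ an $n$-simplex $y^i$ with $\p y^i = c^i$. I will argue that the tuple $(y^0,\dots,y^{k-1},\bel,y^{k+1},\dots,y^{n+1})$ is an $(n,k)$-horn, fill it with an $(n+1)$-simplex $z$ by the Kan property, and then verify that $y^k:=d_k z$ satisfies $\p y^k = c^k$, so that $c^k$ is a boundary as claimed.

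First I would check that this tuple really is an $(n,k)$-horn, i.e. that $d_a y^b = d_{b-1} y^a$ for all $a<b$ in $[n+1]\setminus\{k\}$. Since $\p y^a=c^a$ and $\p y^b=c^b$, both sides of this equation are entries of the cycles $c^a$ and $c^b$, and the equation is exactly one of the relations expressing that $(c^0,\dots,c^{n+1})$ is a compatible tuple of cycles (up to the obvious relabelling). Conceptually this is transparent: a compatible tuple of $(n-1)$-cycles is nothing but a simplicial map $\op{sk}_{n-1}\vD^{n+1}\to K_\pt$, the chosen $y^i$ extend this map over every facet of $\vD^{n+1}$ other than the $k$-th, and the union of those facets is precisely the horn $\Lambda_k^{n+1}$; the Kan property then extends the map over all of $\vD^{n+1}$, and restricting to the $k$-th facet gives the filling of $c^k$.

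Finally, having obtained $z$ with $d_i z=y^i$ for $i\neq k$, I must compute $\p y^k=\p(d_k z)$. For $0\le j\le n$ the simplicial identities of Definition \ref{def:simset} give $d_j d_k z=d_{k-1}d_j z=d_{k-1}y^j$ when $j<k$, and $d_j d_k z=d_k d_{j+1}z=d_k y^{j+1}$ when $j\ge k$; each of these is in turn an entry of $c^j$ or of $c^{j+1}$, and a second application of the compatibility relations of $(c^0,\dots,c^{n+1})$ collapses all of them to the $j$-th entry of $c^k$, so that $\p y^k=c^k$. I expect this last step --- unwinding $\p(d_k z)$ back into $c^k$ --- to be the only genuine difficulty: it is a routine but error-prone index chase through the simplicial identities together with the shifted-mirror-symmetry of the compatibility relations, best organised by treating $j<k$ and $j\ge k$ separately and watching the index shifts carefully.
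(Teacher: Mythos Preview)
Your proposal is correct and follows essentially the same route as the paper: choose fillers $y^i$ for $i\neq k$, observe that compatibility of the $c^i$ makes $(y^0,\dots,\bel,\dots,y^{n+1})$ an $(n,k)$-horn, fill it by the Kan property, and verify that the resulting $k$-th face has boundary $c^k$ via the simplicial identities and the compatibility relations. The only cosmetic difference is that the paper names the completion $y_k$ directly rather than naming the filling $z$ first, and it organises the final index check as ``the missing column of the symmetric matrix is forced by the others''.
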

\begin{proof}
	Let $\p y_i = c^i$ for $n$-simplices $y_i$ and $i\ne k$. Then the compatibility of  $(y_0,\dots,y_{k-1},\bel,y_{k+1},\dots,y_{n+1})$ 
	is implied by the compatibility of $c^i$, since $d_j y_i = x_j^i$.
	Using the Kan property we can complete this horn by some $y_k$. 
	Then $\p y_k$ has to be $c^k$, as it is, due to the compatibility condition, completely determined by the other $y_i$'s: 
	\[
		d_i y_k = d_{k-1} y_i = c_i^{k-1} = c_k^i \text{ for }i< k \text{ and } d_i y_k = d_k y_{i+1} = c_{i+1}^k = c_k^i \text{ for } k\le i.
	\]
	In other words: One column in a matrix with the symmetry is completely determined by the others.
	Either way, $y_k$ shows that $c^k$ is a boundary.
\end{proof}
\begin{example}
	As an example we regard this in the case $n=1$. A $0$-cycle $c=(x,y)$ consists of two points $x,y\in K_0$.
	It can be filled if there is a line $h\in K_1$ that has $x$ and $y$ as its endpoints:
	$d_0h=x$ and $d_1h=y$, or equivalently $\p h=(x,y)$. A compatible set of such cycles would be
	$((x,y),(x,z),(y,z))$. Then the above statement is for $k=0$:
	If there is a line from $x$ to $z$ and from $y$ to $z$, then there is also a line from $x$ to $y$.
\end{example}

\begin{lemma}
	Arbitrary (co-)products of Kan complexes are again Kan complexes.
\end{lemma}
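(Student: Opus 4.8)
The statement splits into two independent claims --- one for products, one for coproducts --- and in both cases my plan is to reduce the horn-filling problem in the (co)product to the corresponding problems in the individual factors, which are solvable by hypothesis.

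For a product $K_\pt := \prod_\alpha K^\alpha_\pt$ I would use that $K_n = \prod_\alpha K^\alpha_n$ with face and degeneracy operators acting coordinatewise. Given an $(n,k)$-horn $(x_0,\dots,\bel,\dots,x_{n+1})$ in $K_\pt$, write $x_j = (x_j^\alpha)_\alpha$; the compatibility relations $d_i x_j = d_{j-1}x_i$ hold in each coordinate, so for every $\alpha$ the tuple $(x_0^\alpha,\dots,\bel,\dots,x_{n+1}^\alpha)$ is an $(n,k)$-horn in the Kan complex $K^\alpha_\pt$ and hence admits a filling $y^\alpha \in K^\alpha_{n+1}$. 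Then $y := (y^\alpha)_\alpha$ is a filling of the original horn, since faces are again computed coordinatewise. (One can phrase the same thing via the earlier remark that a horn is a map $\Lambda^{n+1}_k \to \prod_\alpha K^\alpha$, i.e.\ a compatible family of maps to the $K^\alpha$; fill each and take the product.) I expect this half to be entirely routine.

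For a coproduct $K_\pt := \coprod_\alpha K^\alpha_\pt$ the point I would rely on is structural: each $K^\alpha_\pt$ is a subcomplex, so the operators map $K^\alpha_n$ into $K^\alpha_{n-1}$ resp.\ $K^\alpha_{n+1}$, and the summands $K^\alpha_n \subseteq K_n$ are pairwise disjoint. From this I would argue that a compatible horn cannot straddle two summands: given an $(n,k)$-horn $(x_0,\dots,\bel,\dots,x_{n+1})$, pick any $i \in [n+1]\setminus\{k\}$ and let $\alpha$ be the index with $x_i \in K^\alpha_n$; for any other $l \ne k$ the compatibility relation between $x_i$ and $x_l$ identifies a face of one with a face of the other, and since that face lies in $K^\alpha_{n-1}$ while distinct summands are disjoint, $x_l$ is forced to lie in $K^\alpha_n$ as well. (For $n=0$ the horn has only the single specified vertex, so there is nothing to check.) Hence the horn is really a horn in the single Kan complex $K^\alpha_\pt$, and a filling there gives $y \in K^\alpha_{n+1} \subseteq K_{n+1}$.

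The one mildly delicate point --- the ``main obstacle'', such as it is --- is this confinement argument for coproducts, i.e.\ making precise that a horn, being a connected piece of data, must live inside a single summand; everything else is bookkeeping. Passing from binary to arbitrary (co)products introduces nothing new, since the index set is never used in an essential way.
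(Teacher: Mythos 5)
Your proof is correct, and the product half is exactly the paper's argument (the paper writes out only the binary case and declares arbitrary index sets and coproducts ``very similar'' resp.\ ``not too difficult''). Your confinement argument for coproducts --- using the shared faces $d_i x_l = d_{l-1} x_i$ together with disjointness of the summands to trap the whole horn in a single $K^\alpha_\pt$, with the separate remark for $n=0$ --- is sound and is precisely the detail the paper omits.
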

\begin{proof}
	We restrict our attention to the case of a product of two complexes.
	The case of an arbitrary index set is very similar and the coproduct case is also not too difficult.
	
	Let $(z_0,\dots,\bel,\dots,z_n)$ be a horn in $(K\times L)_\pt$. 
	We can write $z_i = (x_i,y_i)$ and due to the definition of the face operators in a product,
	$(x_0,\dots,\bel,\dots,x_n)$ and $(y_0,\dots,\bel,\dots,y_n)$ are compatible in $K_\pt$ resp. $L_\pt$.
	Using the Kan properties we can complete both horns separately by $x_k$ and $y_k$,
	but then $z_k:=(x_k,y_k)$ completes the first horn.
\end{proof}

\begin{remark}\label{rem:noquo}
	A quotient of two Kan complexes does not have to be a Kan complex again.
	As a general rule, an operation of simplicial sets can only preserve the Kan property,
	if its action on the homotopy groups can be easily understood.

	This is also, why we are not able to give interesting examples of Kan complexes at the moment.
	For instance $\vD^n$, $\p\vD^n$ and $\Lambda_k^{n}$ do not satisfy the Kan property,
	whereas the trivial complex $\s$ does.
	A very important example will be the singular complex $S_\pt(X)$ of a topological space $X$ in section \ref{sec:topology}. 
	The most interesting examples will be the Eilenberg-Mac Lane spaces $H(A,n)_\pt$
	for each abelian group $A\in \Ab$ and natural number $n\in \N_{>0}$ in chapter \ref{sec:EML}. 
	They are spaces with only one non-trivial homotopy group $\pi_n(H(A,n)_\pt)=A$.
\end{remark}

\section{Connections to Topology}\label{sec:topology}
We now define two Functors between $\sSet$ and $\Top$, which will make it possible
to translate statements about one category into the other one. 

\begin{definition}
	The topological standard $n$-simplex $|\vD^n|$ is for $n\in \N_0$ defined by 
	\[
		|\vD^n| := \{(v^0,\dots,v^n) \in \R^{n+1} \; | \; \forall i \in [n]\: v_i\ge 0 \text{ and } \sum\nolimits_{i=0}^{n}v_i = 1\}.
	\]
	We get maps $\delta_i\:|\vD^{n-1}|\to |\vD^{n}|$ and $\sigma_i\:|\vD^{n+1}|\to|\vD^{n}|$ for $i\in [n]$ by
	\begin{align*}
		&\delta_i(v^0,\dots,v^{n-1}) := (v^0,\dots,v^{i-1},0,v^{i},\dots,v^{n+1}) \\
		\text{ and } &\sigma_i(v^0,\dots,v^{n+1}) := (v^0,\dots,v^{i}+v^{i+1},\dots,v^{n+1}).
	\end{align*}
	For a simplicial set $K_\pt$ its (geometric) realization $|K_\pt|$ is the topological space given by
	\[
		\left(\prod\nolimits_{n\in\N_0} K_n \times |\vD^n| \right)\big/ \sim
	\]
	where the equivalence relation $\sim$ is generated by $(d_ix,v)\sim(x,\delta_iv)$ and 
	$(s_ix,w)\sim(x,\sigma_iw)$ for all $n\in\N, x\in K_n, v\in |\vD^{n-1}|, w\in |\vD^{n+1}|, i\in [n]$.
\end{definition}

\begin{remarque}
	The geometric realization is functor to $\CW$, the category of $\CW$-complexes. 
	The realization of the standard $n$-simplex is the topological
	standard $n$-simplex, so $|\vD^n|=|\vD^n|$ is still well-defined. The realization of $\p\vD_\pt^n$
	is homeomorphic to a sphere. May discusses this topic in part 2 of his book \cite{May}.
\end{remarque}

\begin{definition}
	For a topological space $X$ its singular complex $S_\pt(X)$ is given by
	\[
		S_n(X) := \Hom_\Top \left(|\vD^n|, X\right) 
	\]
	with the simplicial set structure $d_i f = f\circ \delta_i$ and $s_i f = f \circ \sigma_i$ for all $n\in\N, f\in S_n(X), i\in[n]$.
\end{definition}

\begin{remarque}\label{rem:singcomp}
	The singular complex $S_\pt(X)$ is a Kan complex for any space $X$, as one can easily see. 
	Just as with $\CW\subset\Top$, it seems convenient to restrict our attention to the Kan complexes $\Kan\subset\sSet$.
	The singular complex functor is a right adjoint to the geometric realization, i.e. there is 
	a natural isomorphism:
	\[
		\Hom_\Top(|K_\pt|,X) \cong \Hom_\sSet(K_\pt,S_\pt(X)).
	\]
	This statement can be found in chapter 15 of \cite{May}.
	For a good introduction into categories, functors, adjointness and so on,
	I recommend to read chapter 1 of Switzer's textbook \cite{Sw}.
\end{remarque}

\begin{theorem}\label{thm:|S|}
	For every Kan complex $K_\pt$ and every $\CW$ complex $X$, there are natural homotopy equivalences:
	\[
		K_\pt \sim S_\pt(|K_\pt|)	\quad \text{ and } \quad 	X \sim |S_\pt(X)|.
	\]
\end{theorem}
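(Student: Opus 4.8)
The plan is to produce the two equivalences as the counit and unit of the adjunction $|\cdot| \dashv S_\pt$ recorded in Remark \ref{rem:singcomp}, and to reduce each of them to a Whitehead-type theorem: the topological one in the first case, a simplicial analogue in the second.

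\emph{The equivalence $X \sim |S_\pt(X)|$.} Under the adjunction isomorphism $\Hom_\Top(|S_\pt(X)|,X) \cong \Hom_\sSet(S_\pt(X),S_\pt(X))$ the identity on $S_\pt(X)$ corresponds to a natural map $\varepsilon_X\: |S_\pt(X)| \to X$, the counit. I would first show that $\varepsilon_X$ is a weak homotopy equivalence for every space $X$, i.e. that it induces isomorphisms on all homotopy groups. Given a based map $S^n \to X$, one triangulates $S^n$ and applies simplicial approximation to present it, up to homotopy, as the realization of a simplicial map out of a triangulated sphere; composing with the corresponding map into $|S_\pt(X)|$ exhibits the class as coming from $\pi_n(|S_\pt(X)|)$, which yields surjectivity. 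The same argument applied to a homotopy $S^n \times [0,1] \to X$ gives injectivity. Since the realization of any simplicial set is a $\CW$ complex, $|S_\pt(X)|$ is $\CW$; with $X$ assumed $\CW$ as well, Whitehead's theorem (see \cite{Sw}, or \cite{May}) promotes the weak equivalence $\varepsilon_X$ to an honest homotopy equivalence.

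\emph{The equivalence $K_\pt \sim S_\pt(|K_\pt|)$.} Here the relevant map is the unit $\eta_K\: K_\pt \to S_\pt(|K_\pt|)$; both source and target are Kan complexes, the target because it is a singular complex (Remark \ref{rem:singcomp}) and the source by hypothesis. Applying geometric realization and using the triangle identity $\varepsilon_{|K_\pt|} \circ |\eta_K| = \mathrm{id}_{|K_\pt|}$ together with the first part --- which tells us $\varepsilon_{|K_\pt|}$ is a homotopy equivalence, as $|K_\pt|$ is $\CW$ --- we conclude that $|\eta_K|$ is a homotopy equivalence of spaces. It then remains to transport this back across the realization functor, for which I would invoke two facts about Kan complexes: (a) for every Kan complex $L_\pt$ the natural comparison map identifies the combinatorial homotopy groups $\pi_n(L_\pt)$ with $\pi_n(|L_\pt|)$, so that $\eta_K$ induces isomorphisms on all homotopy groups; and (b) a simplicial Whitehead theorem, namely that a map of Kan complexes which is an isomorphism on all homotopy groups is a simplicial homotopy equivalence.

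I expect (a) to be the main obstacle; it is the point where the combinatorics genuinely intervenes. One must show that every continuous map $|\vD^k| \to |L_\pt|$, and every homotopy between two such, can --- after subdividing the domain and performing a simplicial approximation --- be replaced by the realization of a simplicial map into $L_\pt$, and the Kan property is precisely what allows one to glue the resulting local simplicial data into a single well-defined simplex. Statement (b), by contrast, is an exercise in obstruction theory of the kind the later sections specialize in: using the filling techniques around lemma \ref{lem:matrix} and the exact sequence of a pair one builds a simplicial homotopy inverse dimension by dimension. Both (a) and (b), as well as the precise form of simplicial approximation used above, are classical and are treated in \cite{May}; granting them, the two displayed equivalences follow exactly as sketched.
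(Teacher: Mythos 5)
The first thing to say is that the paper does not prove this theorem: its ``proof'' explicitly defers everything to chapter 16 of \cite{May} (or I.11 of \cite{GJ}) and confines itself to explaining what the statement means and where it will be used. So there is no argument in the paper to compare yours against, only a citation. What you have written is a correct outline of the standard strategy that those references carry out: take the unit and counit of the adjunction from remark \ref{rem:singcomp}, show the counit $\varepsilon_X\colon|S_\pt(X)|\to X$ is a weak equivalence, promote it with the topological Whitehead theorem, use the triangle identity to see that $|\eta_K|$ is an equivalence, and transfer back using the comparison of combinatorial and topological homotopy groups together with a simplicial Whitehead theorem. That is consistent with the literature, and it is, if anything, more of a proof than the paper offers. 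But be clear that the three facts you defer --- the counit being a weak equivalence, the natural isomorphism $\pi_n(L_\pt)\cong\pi_n(|L_\pt|)$ for Kan complexes $L_\pt$, and the simplicial Whitehead theorem --- are not peripheral lemmas; they are essentially the entire content of the theorem, so your argument is a reduction to cited results rather than a proof.

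One local correction to the sketch. Surjectivity of $\pi_n(\varepsilon_X)$ requires no simplicial approximation at all: a based map $f\colon S^n\cong|\p\vD^{n+1}|\to X$ has an adjoint $\tilde f\colon\p\vD^{n+1}\to S_\pt(X)$ satisfying $\varepsilon_X\circ|\tilde f|=f$ on the nose, so every class lifts immediately. (Simplicial approximation into an arbitrary target $X$ does not even parse, since $X$ carries no triangulation, and for the weak-equivalence claim you need all $X$, not just $\CW$ ones.) The genuine difficulty is injectivity: given an arbitrary $g\colon S^n\to|S_\pt(X)|$ with $\varepsilon_X\circ g$ nullhomotopic, the adjoint of a nullhomotopy downstairs only controls $|\widetilde{\varepsilon_X\circ g}|$, not $g$ itself, so one must first compress $g$ into a finite subcomplex of the realization and show it is homotopic to the realization of a simplicial map --- this is where the Kan property and the real combinatorial work enter. ``The same argument applied to a homotopy'' does not dispose of that step. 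With this caveat, and granting the cited inputs, your two displayed equivalences do follow as you describe.
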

\begin{proof}
	The proof of this theorem involves a lot of work. A full exposition can, for example, be found 
	in chapter 16 of May's book \cite{May} or, with a more modern treatment, in I.11 of \cite{GJ}.
	We just give a short explanation of the statement:
	
	A homotopy equivalence is a map $f:X \to Y$ having a homotopy-inverse $g:Y \to X$, so $f\circ g \sim \id_Y$ 
	and $g\circ f \sim \id_X$. We have not defined homotopies for Kan complexes, but we will mention them in remark \ref{rem:homotopies}.
	In fact, homotopy equivalent spaces are very similar, so the functors $S_\pt$ and $|.|$ are, informally speaking,
	inverse up to homotopy. More formally, one can say that they induce an equivalence of the homotopy categories.
	
	We will not need this statement for any proof or theorem staying in the 'simplicial world'.
	However, it will be used very often, when we want to apply our results about Kan complexes to topological spaces.
	The most important examples for this will be the proof of Hurewicz's theorem for topological spaces in \ref{thm:hur}
	and the proof that the singular and the spectral cohomology theories are equivalent on $\Top$ in chapter \ref{sec:cohom}.
\end{proof}

\section{Homotopy}\label{sec:homotopy}

\begin{definition}
	Two $n$-simplices $x,y \in K_n$ in a Kan complex with the same boundary $\p x=\p y$ are said to be
	homotopic, written as $x\sim y$, if the following $n$-cycle is filled:
	\[
		(s_{n-1}d_0x, \dots, s_{n-1}d_{n-1}x, x, y).
	\]
	The filling $(n+1)$-simplex is called a homotopy, it is denoted by $h\: x\sim y$.
\end{definition}

\begin{remarque}
	One has to check that the given tuple is really a cycle, but this follows immediately from the 
	rules for $d_i$ and $s_i$ and the fact that $d_ix=d_iy$ for all $i\in [n]$.
	
	Geometrically speaking the definition states that $x\sim y$,
	if the space between $x$ and $y$ can be filled. Here the other components of $\p h$ are not important,
	as they are degenerate.
	
	The following Key-lemma will do the main part in proving that homotopy is an equivalence relation. 
	In addition it will justify our definition \ref{def:filler}, where then the new simplex is
	determined up to homotopy. 
	It states that replacing a simplex in a boundary by another one still gives a boundary, 
	if and only if the two simplices were homotopic.
	Thinking of $K_\pt = S_\pt(X)$ for some topological space $X$, this should be clear, 
	because we just have to glue the homotopy and the filling together, as the following picture illustrates:
\end{remarque}

	\begfig{0.0cm}
	\center \includegraphics[width=0.8\textwidth]{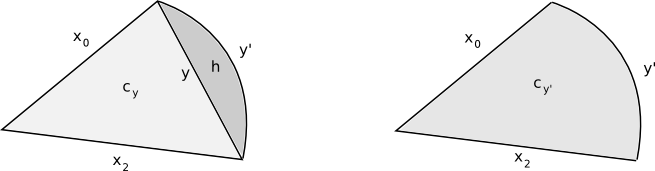}
	\Figure2{The Key lemma: A boundary and a homotopy can be glued to a new boundary.}
	\endfig

\begin{lemma}[Key-Lemma] \label{lem:homotopy}
	Let $c=(x_0,\dots,x_{k-1},y,x_{k+1},\dots,x_n)$ be an $(n-1)$-boundary in a Kan complex $K_\pt$, 
	and let $y'\in K_{n-1}$ satisfy $\p y' = \p y$, such that $c'=(x_0,\dots,y',\dots)$ is a cycle.
	Then $c'$ is a boundary if and only if $y \sim y'$. 
\end{lemma}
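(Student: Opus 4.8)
The plan is to deduce both implications from the matrix lemma (Lemma~\ref{lem:matrix}), by packaging all of the data into a single configuration of $(n-1)$-cycles shaped like the boundary of an $(n+1)$-simplex.

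Fix an $n$-simplex $w$ with $\p w = c$, so that $d_iw = x_i$ for $i\neq k$ and $d_kw = y$, and write $h_{y,y'} := (s_{n-2}d_0y,\dots,s_{n-2}d_{n-2}y,\,y,\,y')$ for the $(n-1)$-cycle appearing in the definition of homotopy, so that $y\sim y'$ means exactly that $h_{y,y'}$ is a boundary (for $n=1$ this cycle degenerates to $(y,y')$). The heart of the proof is to build a compatible $(n+2)$-tuple of $(n-1)$-cycles $(c^0,\dots,c^{n+1})$ three of whose columns are $c$, $c'$ and $h_{y,y'}$, the remaining $n-1$ columns being degenerate cycles of the form $\p(s_jx_i)$. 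Here $c$ and $c'$ occupy two adjacent slots $m,m+1$ with $m\neq k$ — so that the shared entry $x_m = d_mw = d_mw'$ reconciles these two columns — and the slot of $h_{y,y'}$ together with the choice of degeneracies in the remaining slots is then pinned down by the simplicial identities (a placement that is easy to write out explicitly, and that has to be arranged slightly differently when $k=n$ than when $k<n$). The $n-1$ degenerate columns are tautologically boundaries, each $\p(s_jx_i)$ being filled by $s_jx_i$.

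With this family in hand, both directions are immediate. For ``$\Leftarrow$'': if $y\sim y'$ then $c$ is a boundary (by hypothesis, via $w$), $h_{y,y'}$ is a boundary (via the homotopy), and every degenerate column is a boundary — so every column except the one equal to $c'$ is a boundary, and Lemma~\ref{lem:matrix} forces $c'$ to be a boundary. For ``$\Rightarrow$'': if $c'$ is a boundary, then $c$, $c'$ and all of the degenerate columns are boundaries, so by the same lemma the last column $h_{y,y'}$ is a boundary, i.e.\ $y\sim y'$. This is where the two standing hypotheses enter: $\p y' = \p y$ is needed for $h_{y,y'}$ to be a well-defined cycle at all, and the assumption that $c'$ is a cycle is needed for $(c^0,\dots,c^{n+1})$ to be a legitimate tuple of cycles.

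The main obstacle is thus entirely combinatorial: showing that $(c^0,\dots,c^{n+1})$ genuinely has the shifted mirror symmetry, i.e.\ that every pair of columns matches along the prescribed overlap. This is an elementary but fiddly verification using the identities of Definition~\ref{def:simset}, the equality $\p y = \p y'$, and the compatibility of the cycle $c$; it is best kept under control by following the geometric picture of the Key lemma, where $w$ and $w'$ are two faces of an $(n+1)$-simplex agreeing away from $y,y'$ and $h$ is the homotopy filling the slot between them. A little extra care is needed at the boundary cases $k=n$ (where the block of three columns must be shifted) and $n=1$ (where $h_{y,y'}$ carries no degenerate entries), but these require no new ideas.
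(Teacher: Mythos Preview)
Your proposal is correct and follows essentially the same route as the paper: both build a compatible $(n+2)$-tuple of $(n-1)$-cycles with $c$, $c'$, and the homotopy cycle $h_{y,y'}$ as three distinguished columns and degenerate cycles $\p(s_{n-1}x_i)$ filling the rest, then invoke Lemma~\ref{lem:matrix} for both implications at once, with a separate placement for the case $k=n$. The paper is slightly more explicit, fixing $c,c'$ at positions $n,n+1$ (resp.\ $n-1,n$ when $k=n$) and $h$ at position $k$ (resp.\ $n+1$), and displaying the full matrix to verify compatibility; your outline leaves these choices implicit but correctly identifies what forces them.
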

\begin{proof}
	We know that $c'$ is at least a cycle because we have $\p y=\p y'$. Now assume that it is a boundary.
	In order to show $y\sim y'$, we have to fill the cycle 
	\[
		h:=(s_{n-2}d_0y, \dots, s_{n-2}d_{n-2}y, y, y').
	\]
	We want to use lemma \ref{lem:matrix} for this.
	\\
(\textbf{Case:} $k\ne n$) We claim that 
	\[
		(\p (s_{n-1}x_0), \dots, h, \dots, \p (s_{n-1}x_{n-1}), c, c') \in (K_{n-1}^{n+1})^{n+2}
	\]
	is a compatible $(n+2)$ tuple of $(n-1)$-cycles. One can check this by regarding it in matrix form:
	\[ 
			\left(
			\begin{matrix}
			   d_0s_{n-1}x_0 \\ d_1s_{n-1}x_0 	\\   \vdots 	\\  d_{n-2}s_{n-1}x_0 	\\ 	x_0 	\\ x_0
			\end{matrix}\quad
			\begin{matrix}
			   d_0s_{n-1}x_1 \\ d_1s_{n-1}x_1 	\\   \vdots 	\\  d_{n-2}s_{n-1}x_1 	\\ 	x_1 	\\ x_1
			\end{matrix}\;
			\dots\,
		\color{blue}
			\begin{matrix}
			    s_{n-2}d_0y	 \\ s_{n-2}d_1y		\\   \vdots 	\\ s_{n-2}d_{n-2}y		\\  y    	\\ y'
			\end{matrix}\,
		\color{black}
			\dots\;
			\begin{matrix}
			   d_0s_{n-1}x_{n-1} \\ d_1s_{n-1}x_{n-1}\\   \vdots 	\\  d_{n-2}s_{n-1}x_{n-1}\\ x_{n-1}	\\ x_{n-1}
			\end{matrix}\quad
			\begin{matrix}
			    x_0 		 \\    	\vdots		\\   y 			\\    \vdots 			\\  x_{n-1} \\ x_n
			\end{matrix}\quad
		\color{blue}
			\begin{matrix}
			    x_0 		 \\    	\vdots		\\   y'			\\    \vdots 			\\  x_{n-1} \\ x_n
			\end{matrix}
		\color{black}
			\right).
	\]
	Now compatibility should follow by $d_i y= d_{k-1} x_i$ for $k<i$ and $d_i y = d_k x_{i+1}$ for $k\ge i$ as well as
	the rules of interaction between $s_i$ and $d_j$. 
	For example take $i<j\in[n-1]:$
	\[
		d_is_{n-1}x_j = s_{n-2}d_ix_j = s_{n-2}d_{j-1}x_i= d_{j-1}s_{n-1}x_i.
	\]
	
	Given the compatibility of the above matrix, lemma \ref{lem:matrix} tells us that $h$ is filled, since 
	$\p (s_{n-2}x_{i})$ has an obvious filling and $c$ and $c'$ are supposed to be boundaries. Thus $y$ is homotopic 
	to $y'$. Moreover the matrix shows that, given $h\:y\sim y'$ and therefore $h$ filled, $c'$ is a 
	boundary. So it proves both directions at once.\\
	\\
(\textbf{Case:} $k=n$) Here the $(n+2)$ tuple we used in the first case does not work, because $h$ would 
	have to be at the same position as $c_y$. But using 
	\[
		(\p (s_{n-1}x_0), \dots, \p (s_{n-1}x_{n-2}), c_y, c_{y'},h) \in (K_{n-1}^{n+1})^{n+2}
	\]
	yields a proof similar to the one above.
\end{proof}

\begin{corollary}\label{cor:homequi}
	Homotopy is an equivalence relation.
\end{corollary}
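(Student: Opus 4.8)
The plan is to verify the three defining properties of an equivalence relation---reflexivity, symmetry, and transitivity---for the homotopy relation $\sim$ on $n$-simplices with a fixed boundary, in each case by exhibiting an appropriate filling and, wherever possible, reducing the work to the Key-lemma (Lemma \ref{lem:homotopy}) rather than building matrices by hand.

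For \textbf{reflexivity} I would show $x\sim x$ directly: the cycle $(s_{n-1}d_0x,\dots,s_{n-1}d_{n-1}x,x,x)$ that must be filled is precisely $\p(s_{n-1}x)$, since $d_i s_{n-1}x = s_{n-2}d_i x$ for $i<n$ and $d_{n-1}s_{n-1}x = d_n s_{n-1}x = x$. Hence $s_{n-1}x$ is a homotopy $x\sim x$. For \textbf{symmetry}, suppose $h\: x\sim y$. Then the tuple $c=(s_{n-1}d_0x,\dots,s_{n-1}d_{n-1}x,x,y)$ is an $n$-boundary (filled by $h$), and I want to conclude that $c'=(s_{n-1}d_0x,\dots,s_{n-1}d_{n-1}x,y,x)$ is also a boundary, which is exactly the statement $y\sim x$. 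This is an instance of the Key-lemma with $k=n-1$: in $c$ the entry in slot $n-1$ is $x$, we replace it by $y$ (note $\p x=\p y$), and the lemma says the result is a boundary iff $x\sim y$, which we are assuming. Thus $c'$ is a boundary and $y\sim x$. (One should double-check that swapping the last two coordinates is the permissible ``replace one entry'' move for the relevant index; if the indexing makes it cleaner to replace the entry in slot $n$ instead, the same argument applies symmetrically.)

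For \textbf{transitivity}, suppose $h\: x\sim y$ and $h'\: y\sim z$, all three simplices sharing the boundary $\p x=\p y=\p z$. I would show $x\sim z$ again via the Key-lemma: the cycle witnessing $y\sim z$ is $(s_{n-1}d_0y,\dots,s_{n-1}d_{n-1}y,y,z)$, a boundary by hypothesis; since $d_iy=d_ix$ for all $i$, the degenerate entries $s_{n-1}d_iy$ equal $s_{n-1}d_ix$, so this is the same tuple as $(s_{n-1}d_0x,\dots,s_{n-1}d_{n-1}x,y,z)$. Now apply Lemma \ref{lem:homotopy} with $k=n-1$: replacing the slot-$(n-1)$ entry $y$ by $x$ (which has the same boundary) yields a boundary if and only if $y\sim x$. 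But $y\sim x$ holds by symmetry, already established. Hence $(s_{n-1}d_0x,\dots,s_{n-1}d_{n-1}x,x,z)$ is a boundary, i.e.\ $x\sim z$.

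The only genuinely delicate point is bookkeeping: making sure the coordinate whose entry we swap is exactly the one the Key-lemma allows, and that in each application the cycle obtained really matches the homotopy-defining cycle for the pair in question (this is where the identity $d_ix=d_iy$ for homotopic simplices, which makes all the degenerate slots agree, is used repeatedly). I do not expect any serious obstacle beyond this indexing care---reflexivity is immediate, and symmetry and transitivity both collapse to single invocations of Lemma \ref{lem:homotopy} once the tuples are correctly identified. An alternative, more hands-on route would be to construct explicit homotopies by filling suitable horns built from $h$, $h'$ and degeneracies $s_i h$, but routing everything through the Key-lemma keeps the argument short and avoids writing out another symmetric matrix.
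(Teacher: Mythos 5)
Your overall strategy---reflexivity from a degeneracy of $x$, then symmetry and transitivity each as a single invocation of the Key-lemma---is sound and close to the paper's, but two of your three steps contain concrete errors in exactly the ``bookkeeping'' you flagged. First, reflexivity: the degeneracy you need is $s_n x$, not $s_{n-1}x$. Indeed $\p(s_n x) = (s_{n-1}d_0x,\dots,s_{n-1}d_{n-1}x,x,x)$ because $d_i s_n = s_{n-1}d_i$ for $i<n$ and $d_n s_n = d_{n+1}s_n = \id$; whereas $\p(s_{n-1}x) = (s_{n-2}d_0x,\dots,s_{n-2}d_{n-2}x,x,x,s_{n-1}d_nx)$ puts the two copies of $x$ in slots $n-1$ and $n$ rather than $n$ and $n+1$ and leaves $s_{n-1}d_nx$ in the last slot, so it is not the cycle defining $x\sim x$. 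Second, and more seriously, your symmetry argument does not go through as stated: the Key-lemma replaces \emph{one} entry of a boundary, but your target $c'=(\dots,y,x)$ differs from $c=(\dots,x,y)$ in \emph{two} slots. Replacing the slot-$n$ entry $x$ of $c$ by $y$, as you propose, produces $(\dots,y,y)$ (a witness of $y\sim y$), not $(\dots,y,x)$. The correct one-step argument replaces the \emph{last} entry instead: in the boundary $c=(s_{n-1}d_0x,\dots,s_{n-1}d_{n-1}x,x,y)$ replace the slot-$(n+1)$ entry $y$ by $x$; the result is $\p(s_nx)$, which is a boundary, so the Key-lemma (read in the direction ``$c'$ a boundary $\Rightarrow$ $y\sim y'$'') gives $y\sim x$. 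Your transitivity step, by contrast, is correct: replacing $y$ by $x$ in the boundary $(s_{n-1}d_0y,\dots,s_{n-1}d_{n-1}y,y,z)$ is legitimate once $y\sim x$ is known, and yields a witness of $x\sim z$.

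For comparison, the paper proves reflexivity via $s_nx$ and then a single ``anti-transitivity'' statement: if $x\sim y$ and $x\sim z$ then $y\sim z$, obtained by one application of the Key-lemma to $\p h'=(\dots,x,z)$ with $x$ replaced by $y$. Symmetry ($z=x$, using reflexivity) and transitivity then both fall out of this one lemma, which avoids the two-slot trap you fell into. Your route is recoverable with the fixes above, but the anti-transitivity formulation is the cleaner way to package the same Key-lemma applications.
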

\begin{proof}
	Reflexivity follows by regarding the boundary of a degeneration of $x$:
	\[
		\p (s_n x) = (d_0 s_n x, \dots, d_{n+1} s_n x) = (s_{n-1} d_0 x, \dots, s_{n-1} d_{n-1} x, x, x).
	\]
	To show anti-transitivity let $h\:x\sim y$ and $h'\:x\sim z$ be homotopies of $n$-simplices.
	The Key-Lemma \ref{lem:homotopy} tells us that, if we replace $x$ in $\p h$ by $y$:
	\[
		\p h' = (s_{n-1} d_0 x, \dots, s_{n-1} d_{n-1} x, x, z) \;\;\to\;\; c = (s_{n-1} d_0 x, \dots, s_{n-1} d_{n-1} x, y, z)
	\] 
	the resulting cycle $c$ is still filled, because $y$ is homotopic to $z$. But then $c$ already is
	a homotopy from $y$ to $z$.
\end{proof}

\begin{definition}\label{def:relhom}
	Given a pair of Kan complexes $(K_\pt,L_\pt)$ we call two $n$-simplices $x,y\in K_n$ 
	homotopic relative $L_\pt$, if $d_i x = d_i y$ for $1\le i \le n$ and $d_0 x, d_0 y \in L_{n-1}$,
	such that for some $h_L\in L_n$ the cycle 
	\[
		c=(h_L, s_{n-1} d_1 x, \dots, s_{n-1} d_{n-1} x, x, y)
	\]	
	is filled by some $h\in K_{n+1}$. This $h$ is called relative homotopy, denoted by
	$h:x \sim_{\op{rel}L} y$.
\end{definition}

\begin{remarque}\label{rem:relcase}
	The relative case is often very similar to the 'absolute' one. Here one should also check
	that $c$ is a cycle for an appropriate $h_L$. 
	
	Now, apply $\p$ to each entry of this cycle. Then,
	by regarding the obtained matrix, one sees that $h_L$ has to be a homotopy $d_0 x \sim d_0 y$ in $L_\pt$.
\end{remarque}

\begin{lemma}[relative Key-Lemma]\label{lem:relhom}
	For a pair of Kan complexes $(K_\pt,L_\pt)$ we have:
	Let $b = (l,x_1, \dots, x_n)$ be an $(n-1)$-boundary in $K_\pt$ where $l\in L_{n-1}$
	and let $b' := (l', x_1, \dots, x_k', \dots, x_n)$. Then there is an $l'\in L_{n-1}$ such that $b'$
	is filled, if and only if $x_k$ is homotopic to $x_k'$ relative $L_\pt$.
\end{lemma}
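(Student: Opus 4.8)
The plan is to mimic the proof of the absolute Key-Lemma (Lemma \ref{lem:homotopy}), carrying the extra bookkeeping in the $L_\pt$-coordinate. First I would record the easy direction: if $b'$ is a boundary for some $l'\in L_{n-1}$, then in particular $l,l'\in L_{n-1}$ have the same boundary (one checks $\p l = \p l'$ from the cycle conditions for $b$ and $b'$, since all other entries agree). The substance is to produce a relative homotopy $h:x_k\sim_{\op{rel}L}x_k'$, equivalently an $h_L\in L_n$ with $\p h_L$ the homotopy-cycle $(s_{n-2}d_1 x_k,\dots,s_{n-2}d_{n-1}x_k, d_0x_k, d_0x_k')$ inside $L_\pt$, together with an $h\in K_{n+1}$ filling $(h_L, s_{n-1}d_1 x_k,\dots,s_{n-1}d_{n-1}x_k, x_k, x_k')$.

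The key step is to assemble, exactly as in Lemma \ref{lem:homotopy}, a compatible $(n+2)$-tuple of $(n-1)$-cycles whose unknown column is the desired homotopy-cycle $H:=(\p(s_{n-1}x_1) \text{-type first entry},\dots)$, and whose remaining columns are $\p(s_{n-1}x_i)$ for $i\ne k$ together with the two cycles $b$ and $b'$. In the first coordinate (the $L_\pt$-slot) these $\p(s_{n-1}x_i)$-columns would instead carry $\p(s_{n-1}l)$, and the $b,b'$-columns carry $l,l'$; reading off the first row of the matrix one sees that it is itself a compatible $(n+2)$-tuple of $(n-2)$-cycles \emph{in $L_\pt$}, with one unknown column equal to the $L_\pt$-homotopy-cycle between $d_0x_k=d_0l$-faces — wait, more precisely between $d_0 x_k$ and $d_0 x_k'$. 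Applying Lemma \ref{lem:matrix} inside $L_\pt$ first (using that $l$ and $l'$ are $L_\pt$-boundaries, and the $\p(s_{n-1}l)$-columns are obviously boundaries) yields the required $h_L\in L_n$; this also shows $d_0x_k\sim d_0 x_k'$ in $L_\pt$, so the relative-homotopy cycle $c$ of Definition \ref{def:relhom} makes sense. Then applying Lemma \ref{lem:matrix} in $K_\pt$ to the full $(n+2)$-tuple — now that $h_L$ is known, the $H$-column has a filling iff $b'$ is a boundary — proves both implications simultaneously, just as in the absolute case. As there, the case $k=n$ requires a reshuffled tuple $(\p(s_{n-1}x_1),\dots,\p(s_{n-1}x_{n-1}), c, c', H)$ but is otherwise identical.

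The main obstacle I expect is purely combinatorial: verifying compatibility of the big matrix, i.e. that the shifted-mirror-symmetry identities $x_j^i = x_i^{j-1}$ hold for every entry, including the mixed rows where $s_{n-1}$, $s_{n-2}$, the $d_j$'s, and the substituted entry $x_k$ (with its faces expressed via $d_i x_k = d_{k-1}x_i$ for $i>k$ and $d_i x_k = d_k x_{i+1}$ for $i\le k$) all interact. This is the same bookkeeping as in Lemma \ref{lem:homotopy}, with the added wrinkle that the $0$th face must stay inside $L_\pt$ throughout — which is automatic since $l,l'\in L_{n-1}$, $d_0 x_i \in L_{n-2}$ for the cycle $b$, and degeneracies/faces of elements of a subcomplex remain in the subcomplex. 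I would simply remark that the verification runs exactly as before and highlight the one new point: the first matrix row lives in $L_\pt$, which is what lets us produce $l'$ and $h_L$ with the required membership.
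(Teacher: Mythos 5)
Your plan is correct and follows essentially the same route as the paper: produce the missing $L_\pt$-datum ($l'$ in one direction, $h_L=d_0h$ in the other) from the Kan property of $L_\pt$ applied to the extra $L$-column, then assemble the enlarged compatible $(n+2)$-tuple $(\p z,\p(s_{n-1}x_1),\dots,h,\dots,b,b')$ and let Lemma \ref{lem:matrix} in $K_\pt$ prove both implications at once, with the boundary case of $k$ handled by a reshuffled tuple. The only cosmetic difference is that you phrase the $L_\pt$-step as an application of Lemma \ref{lem:matrix} inside $L_\pt$, whereas the paper simply completes the horn $(s_{n-2}d_0x_1,\dots,\bel,\dots,s_{n-2}d_0x_{n-1},l,l')$ in $L_\pt$ directly --- these amount to the same use of the Kan property of the subcomplex.
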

\begin{proof}
	The argumentation is very similar to the one in lemma \ref{lem:homotopy}, but here one has to do 
	some extra work for the $l$ part. We only work in the case $k\ne n+1$, the remaining one then follows quite similarly.
	
	For the first direction of the proof let $h\:x_k \sim_{\op{rel}L} x_k'$ be a relative homotopy.
	The following $(n-1,k)$-horn lies completely in $L_\pt$, because $d_0 h \in L_\pt$ by definition 
	and $s_{n-2} d_0 x_i = s_{n-2} d_{i-1} l \in L_\pt$. Thus we can use the Kan property of $L_\pt$ to define $l'\in L_{n-1}$ to complete:
	\[
		\p z = (s_{n-2} d_0 x_1, \dots, d_0 h, \dots, s_{n-2} d_0 x_{n-1}, l, \underline{l'}).
	\]
	Now, if the following $(n+2)$ tuple of $(n-1)$-cycles is compatible, the proof is finished, because then
	$b'_{l'}$ is filled due to lemma \ref{lem:matrix} and all most of the other columns are boundaries by definition:
	\[
		(\p z, \p (s_{n-1}x_1), \dots, h, \dots, \p (s_{n-1}x_{n-1}), b, b'_{l'}) \in (K_{n-1}^{n+1})^{n+2}
	\]
	One can check compatibility using the corresponding matrix:
	\[ 
		\left(
		\begin{matrix}
		   s_{n-2}d_0x_0 \\ \vdots 				\\   d_0h  	\\   \vdots  	\\  s_{n-2}d_0x_{n-1} 		\\ 	l	 	\\ l'
		\end{matrix}\quad
		\begin{matrix}
		   d_0s_{n-1}x_1 \\ d_1s_{n-1}x_1 		\\   \vdots \\   \vdots		\\  d_{n-2}s_{n-1}x_1 		\\ 	x_1 	\\ x_1
		\end{matrix}\;
		\dots\,
	\color{blue}
		\begin{matrix}
			    d_0 h	 \\ s_{n-2}d_1x_k		\\   \vdots \\   \vdots 	\\ s_{n-2}d_{n-2}x_k		\\  x_k    	\\ x_k'
		\end{matrix}\,
		\dots\;
	\color{black}
		\begin{matrix}
		   d_0s_{n-1}x_{n-1}\\ d_1s_{n-1}x_{n-1}\\   \vdots \\   \vdots 	\\  d_{n-2}s_{n-1}x_{n-1}	\\ x_{n-1}	\\ x_{n-1}
		\end{matrix}\quad
		\begin{matrix}
		    l	 		 \\    	\vdots			\\   x_k	\\   \vdots 	\\    \vdots 				\\  x_{n-1} \\ x_n
		\end{matrix}\quad
	\color{blue}
		\begin{matrix}
		    l'	 		 \\    	\vdots			\\   x_k'	\\   \vdots 	\\    \vdots 				\\  x_{n-1} \\ x_n
		\end{matrix}
	\color{black}
		\right).
	\]
	For the other direction we can use the same matrix. But now $l'$ is already given by $b'_{l'}$ 
	and we have to define $d_0h$ by filling 
	\[
		\p z = (s_{n-2} d_0 x_1, \dots, \underline{d_0 h}, \dots, s_{n-2} d_0 x_{n-1}, l, l') \text{ in } L_\pt.
	\]
\end{proof}

	As in corollary \ref{cor:homequi} we now easily get the following corollary. To ensure reflexivity one should define $x\sim x$
	in the case that $d_0x \not\in L_\pt$, which is not clear from definition \ref{def:relhom}.

\begin{corollary}\label{cor:relhomequi}
	Homotopy relative to a subcomplex is an equivalence relation.
\end{corollary}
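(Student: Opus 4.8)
The plan is to imitate the proof of Corollary \ref{cor:homequi}, now using the relative Key-Lemma \ref{lem:relhom} in place of Lemma \ref{lem:homotopy}, while dealing separately with the degenerate case mentioned just above the statement. Recall that $x \sim_{\op{rel}L} y$ requires $d_i x = d_i y$ for $1 \le i \le n$, that $d_0x, d_0y \in L_{n-1}$, and that the cycle $c = (h_L, s_{n-1}d_1x, \dots, s_{n-1}d_{n-1}x, x, y)$ be filled for some $h_L \in L_n$. As noted in the text, to even make reflexivity meaningful we first \emph{extend} the definition: declare $x \sim_{\op{rel}L} x$ whenever $d_0 x \notin L_{n-1}$, so that the relation is at least defined on all of $K_n$. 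Then symmetry and transitivity only need to be checked for simplices whose $0$-th face lies in $L$.

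First I would prove reflexivity for $x$ with $d_0 x \in L_{n-1}$: take $h_L := s_{n-1} d_0 x$, which lies in $L_n$ because $L_\pt$ is a subcomplex, and observe that
\[
	\p(s_n x) = (s_{n-1}d_0x, s_{n-1}d_1x, \dots, s_{n-1}d_{n-1}x, x, x),
\]
by the simplicial identities exactly as in Corollary \ref{cor:homequi}; hence $s_n x$ is a relative homotopy $x \sim_{\op{rel}L} x$ with $h_L = s_{n-1}d_0x$. Combined with the extended convention, reflexivity holds on all of $K_n$.

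Next, for symmetry and transitivity I would set up the boundary needed to invoke Lemma \ref{lem:relhom}. Given $h \: x \sim_{\op{rel}L} y$ and $h' \: x \sim_{\op{rel}L} z$ (all $n$-simplices with $d_0$-faces in $L$), the $(n-1)$-boundary
\[
	b = \p h' = (h'_L, s_{n-1}d_1x, \dots, s_{n-1}d_{n-1}x, x, z)
\]
has its first entry $h'_L$ in $L_{n-1}$, so it has the shape required by the relative Key-Lemma, with $x$ sitting in the $k = n$-th slot (using the $k \ne n+1$ case of Lemma \ref{lem:relhom}). Replacing that entry $x$ by $y$ gives $b' = (l', s_{n-1}d_1x, \dots, s_{n-1}d_{n-1}x, y, z)$; since $y \sim_{\op{rel}L} x$ — which follows once we have symmetry, so I would first run this argument with $h = h'$ reversed to get symmetry, then with the general pair to get transitivity, just as in Corollary \ref{cor:homequi} — Lemma \ref{lem:relhom} tells us $b'$ is again a boundary for a suitable $l' \in L_{n-1}$. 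But a filling of $b'$ with $d_1 x = d_1 y$, \dots, $d_{n-1} x = d_{n-1} y$ (which hold because all of $x, y, z$ have the same $1$-through-$n$ faces) is precisely a relative homotopy $y \sim_{\op{rel}L} z$, with $l'$ playing the role of $h_L$; here one also checks $d_0 y, d_0 z \in L$, which is part of the hypotheses on $h, h'$.

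The main obstacle I expect is bookkeeping rather than conceptual: one must make sure that the various faces match up so that the tuple $b'$ really is of the form to which Lemma \ref{lem:relhom} applies — in particular that the common faces $d_i x = d_i y = d_i z$ for $1 \le i \le n$ are consistent with what the lemma produces, and that the ``$L$-part'' entry stays in $L_\pt$ throughout. The degenerate convention ($x \sim_{\op{rel}L} x$ when $d_0 x \notin L$) must also be threaded through: one checks that $x \sim_{\op{rel}L} y$ forces $d_0 x \in L \iff d_0 y \in L$, so the relation never ``mixes'' the two kinds of simplices, and within each kind the above arguments (or the triviality of the extended convention) give the equivalence-relation axioms. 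Apart from that, the proof is a direct transcription of Corollary \ref{cor:homequi}.
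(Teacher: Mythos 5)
Your proposal is correct and is exactly the argument the paper intends: the paper gives no separate proof, simply asserting the corollary follows ``as in Corollary \ref{cor:homequi}'' via the relative Key-Lemma \ref{lem:relhom}, with the same caveat about extending the definition to get reflexivity when $d_0x\notin L_\pt$. Your reflexivity witness $s_nx$ with $h_L=s_{n-1}d_0x$ and the anti-transitivity step (replace $x$ by $y$ in $\p h'$ using Lemma \ref{lem:relhom}) are precisely the intended transcription; the only tiny quibble is that the lemma as stated lets you replace $x$ by $y$ given $x\sim_{\op{rel}L}y$ directly, so you do not actually need symmetry before running the anti-transitivity argument.
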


\begin{remark}\label{rem:homotopies}
	One can define a homotopy between two simplical maps of simplicial sets $f,g:K_\pt \to L_\pt$ 
	to be a simplicial map $h:K_\pt \times \varDelta^1 \to L_\pt$ such that 
	$h_{| K_\pt \times \{0\}} = f$ and $h_{| K_\pt \times \{1\}} = g$. 
	This is an equivalence relation, if $L_\pt$ is a Kan complex. 
	
	In the case $K_\pt = \varDelta^n$ the maps $f$ and $g$ represent the $n$-simplices $\image f, \image g \in K_n$.
	They have the same boundary, if the maps agree on $\partial \varDelta^n$: $f_{|\partial \varDelta^n} = g_{|\partial \varDelta^n}$.
	Then we have $\image f \sim \image g$ in the sense of our definition, if and only if there is a 
	homotopy $h:f\sim g$ that is constant on $\partial \varDelta^n \times \varDelta^1$.
	
	We will not work out these things, as they require additional work. Yet, it is a very interesting
	enhancement to several things we are going to consider, so I recommend to have a look at May's textbook \cite{May},
	where this is elaborated in detail.
\end{remark}

\section{Homotopy Groups}\label{sec:homgrp}

\begin{definition}
	For all $n\in \N_0$ the $n$th homotopy group of a pointed Kan complex $(K_\pt,\s)$ is 
	the set of $n$-simplices with boundary in the basepoint, up to homotopy: 
	\[
		\pi_n(K_\pt,\s) := \lbrace x \in K_n \;|\; \p x = (\s,\dots,\s) \rbrace\big/\sim
	\]
	For $n\ge 1$ the composition of two equivalence classes $[x],[y]\in \pi_n(K_\pt,\s)$ is defined by 
	$[xy]$ where $xy$ completes 
	\[
		(\s,\dots,\s,y,\underline{xy},x).
	\]
	Similarly for a pointed Kan pair $(K_\pt,L_\pt,\s)$ and $n\in \N_{>0}$ the $n$th relative 
	homotopy group is the set
	\[
		\pi_n(K_\pt,L_\pt,\s) := \lbrace x \in K_n \;|\; \p x = (l,\s,\dots,\s), l\in L_\pt \rbrace\big/\sim_{\op{rel}L},
	\]
	and for $n\ge 2$ $[x][y] = [xy]$ is defined by completing the next horn for some $h\in L_{n-1}$:
	\[
		(h,\s,\dots,\s,y,\underline{xy},x).
	\]
	For a map $f:(K_\pt,\s) \to (L_\pt,\s)$ we set $f_* = \pi_n(f): \pi_n(K_\pt,\s)\to \pi_n(L_\pt,\s)$ by $f_*[x]=[f(x)]$.
\end{definition}

\begin{remarque}\label{rem:tophomgrp}
	For a topological space $X$, its topological homotopy groups can be defined 
	as the combinatorial ones of its singular complex:
	\[
		\pi_n^\Top(X,x_0) := \pi_n(S_\pt(X),S_\pt(x_0)), \quad \pi_n^\Top(X,Y,x_0) := \pi_n(S_\pt(X),S_\pt(Y),S_\pt(x_0)).
	\] 
	This identification can be used to motivate the definition of the multiplication in $\pi_n$ by the following picture:
	
	\begfig{0.0cm}
	\center
	\includegraphics[width=0.3\textwidth]{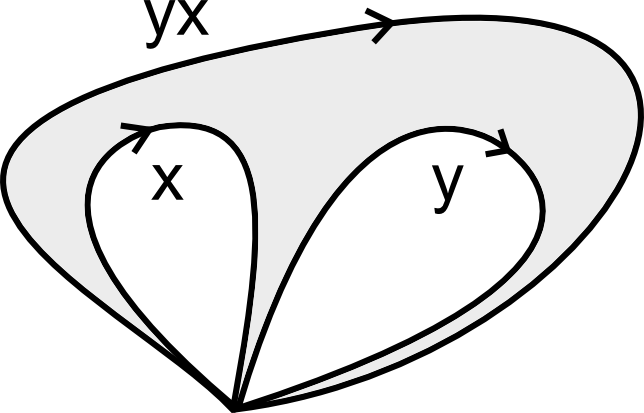}
	\Figure3{The concatenation of two loops $x$ and $y$ is homotopic to their product $xy$.}
	\endfig
	
	The homotopy groups are invariant under the homotopies from remark \ref{rem:homotopies}:
	I.e. if $f\sim g:(K_\pt,\s) \to (L_\pt,\s)$ then $f_*=g_*$. 
	This is for example proven by May in \cite{May}.
	Using the approach of Goerss and Jardine in \cite{GJ}, this statement follows almost immediately.
	Combined with theorem \ref{thm:|S|} this shows that the homotopy groups are also compatible with the geometric realization; 
	i.e. there is a natural isomorphism:
	\[
		\pi_n(K_\pt,L_\pt,\s) \cong \pi_n(S_\pt|K_\pt|,\;S_\pt|L_\pt|,\;S_\pt|\s|) = \pi_n^\Top (|K_\pt|,|L_\pt|,|\s|).
	\]
\end{remarque}

\begin{remarque}\label{rem:motiv}
	Let $ (h,\s,\dots,y,\underline{xy},x) $ be filled as in the definition, then $\p h$ has to be
	\[ 
		(\s, \dots, \s, d_0 y, d_0(xy), d_0 y). 
	\] 
	Thus we get $[d_0x][d_0x]=[d_0(xy)]$
	and so we have found that $d_0: \pi_n(K_\pt,L_\pt,\s) \to \pi_{n-1}(L_\pt)$ is a group homomorphism.
	It is well-defined due to remark \ref{rem:relcase},
	but we do not have yet that the homotopy groups are really well defined groups. 
\end{remarque}

\begin{theoreme}\label{thm:homgrp}
	The (relative) homotopy groups are well defined sets for $n\ge 0\;(1)$ and groups for $n\ge 1\;(2)$, 
	these groups are abelian for $n\ge 2\;(3)$. The assignments $(K_\pt,\s) \mapsto \pi_n(K_\pt,\s)$
	and $(K_\pt,L_\pt,\s) \mapsto \pi_n(K_\pt,L_\pt,\s)$ are functorial.
\end{theoreme}
\begin{proof}
	Due to corollary \ref{cor:homequi} and \ref{cor:relhomequi} we know that homotopy is an equivalence
	relation, thus the homotopy groups are well defined as sets.
	
	For the following proof, let $x,y,z,x',y',z'\in K_n$ be representatives of $\pi_n(K_\pt,\s)$,
	so $\p x = \p y = \dots = (\s,\dots,\s)$.
	
	We first restrict our attention to the case of the absolute homotopy groups.
	\\
	\\
	\emph{Claim 1: The composition is well-defined.}
	
	If we fix two representatives $x,y$ this is just the statement of the Key-lemma \ref{lem:homotopy}.
	But if $x'$ is another representative of $[x]$ then the same lemma tells us that
	\[
		(\s,\dots,\s,y,xy,x') \text{ is filled, because } (\s,\dots,\s,y,xy,x) \text{ is filled.}
	\]
	That shows $[x][y']=[xy]$. If $y'\in[y]$, we easily get $[x'][y'] = [xy]$.
	\\
	\\
	\emph{Claim 2: The group laws hold.}
	
	A neutral element is given by $e=[\s]$ because $\p s_n x = (\s,\dots,\s,x,x)$ implies $[x][\s]=[x]$ and
	$s_{n-1}x$ shows $[\s][x]=[x]$.
	For a given $x$ we find a left-inverse element by filling the horn $(\s,\dots,\s,x,\s,\underline{y})$,
	since this construction yields $[y][x]=[\s]=e$. 
	
	Let $[x][y]=[xy],\;[xy][z]=[(xy)z]$ and $[y][z]=[yz]$. Then, in the following matrix, the second to last column is filled,
	because all the others are:
	\[
	\left(
		\begin{matrix}
			\ddots 	\\		\dots 	\\		\dots 	\\ 		\dots	\\   	\dots	
		\end{matrix}\quad
		\begin{matrix}
			\vdots 	\\		\s 		\\		\s 		\\ 		\s		\\   	\s	
		\end{matrix}\quad
		\begin{matrix}
		\vdots 	\\		\s 		\\		z 		\\ 		yz		\\   	y	
		\end{matrix}\quad
		\begin{matrix}
			\vdots 	\\		\s 		\\		z		\\ 		(xy)z	\\ 		xy
		\end{matrix}\quad
\color{blue}
		\begin{matrix}
			\vdots 	\\		\s 		\\		yz		\\ 		(xy)z	\\ 		x
		\end{matrix}
\color{black}\quad
		\begin{matrix}
			\vdots 	\\		\s 		\\		y		\\ 		xy		\\ 		x	
		\end{matrix}
	\right).
	\]
	This shows $[x][yz]=[(xy)z]$. For the next proofs we will omit lines and collums only containing stars, 
	as they do not provide additional information anyway.
	\\
	\\
	\emph{Claim 3: These groups are abelian for $n\ge 2$.}
	
	First we show that for arbitrary $x,y$ the cycles $(x,x,y,y), (x,\s,\s,x)$ and $(x,y,y,x)$ 
	are filled, by using that $\p(s_ix)=(\dots,\s,x,x,\s,\dots)$ is a boundary:
	\[
	\left(
		\begin{matrix}
			x 		\\		x 		\\ 		\s		\\   	\s	
		\end{matrix}\quad
		\begin{matrix}
			x 		\\		x 		\\ 		\s		\\   	\s	
		\end{matrix}\quad
	\color{blue}
		\begin{matrix}
			x	 	\\		x		\\ 		y		\\ 		y
		\end{matrix}
	\color{black}\quad
		\begin{matrix}
			\s 		\\		\s		\\ 		y		\\ 		y	
		\end{matrix}\quad
		\begin{matrix}
			\s 		\\		\s		\\ 		y		\\ 		y	
		\end{matrix}
	\right)
	\quad
	\left(
		\begin{matrix}
			\s		\\		x 		\\ 		x		\\   	\s	
		\end{matrix}\quad
		\begin{matrix}
			\s 		\\		\s		\\ 		x		\\   	x	
		\end{matrix}\quad
	\color{blue}
		\begin{matrix}
			x	 	\\		\s		\\ 		\s			\\ 		x
		\end{matrix}
	\color{black}\quad
		\begin{matrix}
			x 		\\		x		\\ 		\s		\\ 		\s	
		\end{matrix}\quad
		\begin{matrix}
			\s 		\\		x		\\ 		x		\\ 		\s	
		\end{matrix}
	\right)
	\quad
	\left(
	\color{blue}
		\begin{matrix}
			x		\\		y 		\\ 		y		\\   	x	
		\end{matrix}
	\color{black}\quad
		\begin{matrix}
			x		\\		\s		\\ 		\s		\\   	x	
		\end{matrix}\quad
		\begin{matrix}
			y	 	\\		\s		\\ 		\s		\\ 		y
		\end{matrix}\quad
		\begin{matrix}
			y 		\\		\s		\\ 		\s		\\ 		y	
		\end{matrix}\quad
		\begin{matrix}
			x 		\\		x		\\ 		y		\\ 		y	
		\end{matrix}
	\right).	
	\]
	Now we can use this in the following matrix to show that $[x][y]=[y][x]$:
\[
	\left(
		\begin{matrix}
			x 		\\		\s	 	\\ 		\s		\\   	x	
		\end{matrix}\quad
		\begin{matrix}
			x 		\\		x 		\\ 		y		\\   	y	
		\end{matrix}\quad
		\begin{matrix}
			\s	 	\\		x		\\ 		yx		\\ 		y
		\end{matrix}\quad
	\color{blue}
		\begin{matrix}
			\s		\\		y		\\ 		yx		\\ 		x	
		\end{matrix}
	\color{black}\quad
		\begin{matrix}
			x 		\\		y		\\ 		y		\\ 		x	
		\end{matrix}
	\right).	
	\]
	\\
	\emph{Claim 4: This construction is functorial.}
	 
	To see that $f_*\:\pi_n(K_\pt,\s)\to\pi_n(K_\pt',\s)$ is well-defined by $f_*[x] = [f(x)]$,
	take two representatives with $x\sim y\in K_n$. 
	Given a homotopy $ K_{n+1}\ni h\:x\sim y$, we regard $f(h)$: 
	\[
		\p f(h) = f^{\times n}(\p h) = (f(s_{n-1}d_0x), \dots, f(x), f(y)) = (s_{n-1}d_0f(x), \dots, f(x), f(y))
	\]
	because $f$ commutes with $d_i$ and $s_i$. So $f(h)\:f(x)\sim f(y)$ and therefore $f_*([x]) = f_*([y])$.
	
	To check that $f_*$ is a group homomorphism, let $(\s,\dots,\s,y,xy,x)$ be a boundary 'witnessing' 
	the composition $[x][y]=[xy]$. If it is filled by some $h\in K_{n+1}$ we get 
	\[
		\p (f(h)) = (\s,\dots,\s,f(y),f(xy),f(x)).
	\]
	Thus we have $[f(x)][f(y)]=[f(xy)]$.	
	\\
	\\
	\emph{Claim 5: The relative homotopy groups also satisfy the required properties.}
	
	The proofs in the relative case are obtained by adding a new column and row to the preceding matrices.
	To give an example we do so for the associativity: Let $x,y,z$ be representatives for $\pi_n(K_\pt,L_\pt)$
	and let the following cycles be filled:
	\[
		(l_0,\s,\dots,\s,z,yz,y),\;  (l_1,\s,\dots,\s,z,(xy)z,xy),\;  (l_3,\s,\dots,\s,y,xy,x).
	\]
	This implies $[y][z]=[yz]$ and so on. Then we define $l_2\in L_n$ to fill 
	\[
		(\s,\dots,\s,l_0,l_1,\underline{l_2},l_3),
	\]
	which is a horn because all the $l_i$ have trivial boundary. Now the following matrix is compatible
	and every column but the second to last is certainly filled.
	\[
	\left(
		\begin{matrix}
			\ddots	\\		l_0	 	\\		l_1	 	\\ 		l_2		\\   	l_3	
		\end{matrix}\quad
		\begin{matrix}
			\ddots 	\\		\ddots	\\		\dots	\\ 		\dots	\\   	\dots	
		\end{matrix}\quad
		\begin{matrix}
			l_0 	\\		\vdots	\\		z 		\\ 		yz		\\   	y	
		\end{matrix}\quad
		\begin{matrix}
			l_1 	\\		\vdots	\\		z		\\ 		(xy)z	\\ 		xy
		\end{matrix}\quad
	\color{blue}
		\begin{matrix}
			l_2 	\\		\vdots	\\		yz		\\ 		(xy)z	\\ 		x
		\end{matrix}
	\color{black} \quad
		\begin{matrix}
			l_3 	\\		\vdots	\\		y		\\ 		xy		\\ 		x	
		\end{matrix}
	\right).
	\]	
	Thus it shows $[x][yz]=[xy][z]$. The other proofs are copied similarly from the absolute case.
	One should remark that in the relative case, always one dimension more is needed to carry 
	the simplices from the subcomplex $L_\pt$. This is the case for the definition as a set, 
	the group structure and the commutativity.
\end{proof}

\begin{remarque}\label{rem:loopspace}
	One easily shows that there is a bijection $\pi_n(K_\pt,\s,\s) \cong \pi_n(K_\pt,\s)$, 
	which is a group isomorphism for $n\ge 2$.
	Also, the homotopy groups of a (co-)product can easily be calculated, the reader might try this as an exercise.
	
	Following the general rule from remark \ref{rem:noquo} it should be easy to define the loop-space of a pointed
	Kan complex $(K_\pt,\s)$. Indeed, one can define a path-space by
	\[
		P(K,\s)_n := \{ x \in K_{n+1} \;|\; d_0 \dots d_{n} x = \s \}
	\]
	with the same operators $d_i^P := d_{i}$ and $s_i^P := s_{i}$. Then $d_{n+1}$, which is not used in $P$, induces a simplicial map: 
	$d_{n+1}: P(K,\s)_\pt \to K_\pt$. The preimage of the base-point is the loop-space $\gO(K_\pt,\s) := d_{n+1}^{-1}(\s)$.
	One can then check that $P$ always has trivial homotopy groups
	and the homotopy groups of $\gO$ are those of $K_\pt$ shifted by one dimension. 
\end{remarque}

\section{Applications: The exact sequence of a pair and minimal complexes}\label{sec:exact}

\begin{theoreme}
	The following natural sequence of a pointed pair $(K_\pt,L_\pt,\s)$ is exact:
	\[
		\dots 	\xrightarrow{p_*}	\pi_{n+1}(K_\pt, L_\pt,\s)  
				\xrightarrow{\p_*} 	\pi_n(L_\pt,\s) 
				\xrightarrow{i_*} 	\pi_n(K_\pt,\s) 
				\xrightarrow{p_*} 	\pi_n(K_\pt, L_\pt,\s)  
				\xrightarrow{\p_*} 	\dots
	\]
	\[
		\dots 	\xrightarrow{p_*}	\pi_1(K_\pt, L_\pt,\s)  
				\xrightarrow{\p_*} 	\pi_0(L_\pt,\s) 
				\xrightarrow{i_*} 	\pi_0(K_\pt,\s) 
				\rightarrow			\pi_0(K_\pt,\s)\big/\pi_0(L_\pt,\s).
	\]
	Here $i\:L_\pt \to K_\pt$ is the inclusion, $p_*\:\pi_n(K_\pt,\s)\cong\pi_n(K_\pt, \s,\s)\to\pi_n(K_\pt, L_\pt,\s)$ 
	is due to remark \ref{rem:loopspace}, and $\p_*[x]$ is $[d_0x]$ as in remark \ref{rem:motiv}:
\end{theoreme}
\begin{proof}
	First we show that composing two maps gives the trivial map:\\
	Let $[x] \in \pi_n(K_\pt,\s)$. Then $\p_*p_*[x] = [d_0x] = [\s] \in \pi_{n-1}(L_\pt,\s)$. \\
	If $[x] \in \pi_n(K_\pt,L_\pt,\s)$, we have $\p x = (l,\s,\dots,\s)$ for some $l\in L_\pt$. By the Key-lemma this
	implies $l\sim \s$ in $K_\pt$, 
	so $i_*\p_*[x] = [d_0x] = [\s] \in \pi_{n-1}(K_\pt)$.\\
	For $[l] \in \pi_n(L_\pt,\s)$ we fill $ (\underline{l'},\s,\dots,\s,l)$
	in $L_\pt$ and by definition $\s \sim_{\text{rel}L} l$. Thus $p_*i_*[l]=[l]=[\s]\in\pi_n(K_\pt,L_\pt,\s)$.\\
	\\
	Next, we have to show that the kernels are contained in the images:
	
	Let $i_*[l]=[\s]$ for some $[l]\in\pi_n(L_\pt,\s)$. Because $l\sim\s$ in $K_\pt$, by the Key-lemma we can replace in $(\s,\dots,\s)$ 
	the first star by $l$ and get a filling $h\in K_{n+1}$: $\p h = (l,\s,\dots,\s)$. 
	We can regard $h$ as a representative in $\pi_{n+1}(K_\pt,L_\pt,\s)$, satisfying $\p_*[h]=[d_0h]=[l]$.
	
	Now take $[x]\in\pi_n(K_\pt,\s)$ such that $p_*[x]=[\s]$ or, equivalently, $x\sim_{\text{rel}L}\s$.
	Because $(\s,\dots,\s)$ is filled, the relative Key-lemma finds some $l\in L_n$ for us such that
	$(l,x,\s,\dots,\s)$ is filled in $K_n$. But, since $\p s_0x=(x,x,\s,\dots,\s)$ is filled as well, the 'absolute' Key-lemma 
	tells us $x\sim l$. That is just $i_*[l]=[l]=[x]\in \pi_n(K_\pt,\s)$.
	
	Finally, for $[x]\in\pi_n(K_\pt,L_\pt,\s)$ with $\p_*[x] = [\s]$, take a homotopy $h\:d_0x\sim\s$ in $L_n$.
	Define $y\in K_\pt$ to complete $(h,\s,\dots,\s,x,\underline{y})$, which is a cycle because $d_{n-1}h=d_0x$.
	This implies $\p y=(\s,\dots,\s)$ as well as $x \sim_{\text{rel}L} y$. Therefore $[y]\in\pi_n(K_\pt,\s)$ 
	and $p_*[y]=[y]=[x]\in \pi_n(K_\pt,L_\pt,\s)$.
\end{proof}

As another short application we want to consider minimal and $n$-connected complexes:

\begin{definition}
	A Kan complex $K_\pt$ is called minimal, if for any $x,y \in K_n$ with $\partial x = \partial y$ homotopy implies equality:
	\[
		x \sim y  \Rightarrow x=y.
	\]
	A Kan complex $(K_\pt,\s)$ is called $n$-connected, if we have $\pi_k(K_\pt,\s) = 0$ for all $0\le k \le n$.
\end{definition}

\begin{lemma}\label{lem:minsub}
	Every Kan complex $K_\pt$ has a minimal subcomplex $M_\pt \subset K_\pt$ which is a homotopy retract.
	This means, there is a retraction $r:K_\pt \to M_\pt$ such that $r_{|M} = \id_M$ and there is a homotopy $h:r\sim \id_K$.
	This subcomplex is unique up to isomorphism.
\end{lemma}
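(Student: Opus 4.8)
The plan is to construct $M_\pt$ by choosing, in each dimension $n$ and by induction on $n$, exactly one simplex from each homotopy class of simplices whose boundary already lies in $M_{n-1}$, being careful that the choices are compatible with degeneracies. First I would fix a base point and declare $M_0$ to be a set of representatives for the homotopy classes in $K_0$ (so $M_0$ just picks one point per path component; note any two $0$-simplices are ``homotopic'' in the degenerate sense that their boundaries — the empty tuple — agree, so really $M_0$ meets each component of $\pi_0$). Then, assuming $M_{<n}$ is defined and closed under the face and degeneracy operators, I would say a simplex $x\in K_n$ is \emph{admissible} if $\p x\in (M_{n-1})^{n+1}$, partition the admissible simplices into homotopy classes, and in each class make a choice of representative — with the crucial proviso that if the class contains a degenerate simplex $s_i m$ with $m\in M_{n-1}$, then we \emph{must} choose a degenerate simplex, and in fact a consistency argument (using the simplicial identities and minimality in lower degrees) shows the degenerate admissible simplex with a given boundary is unique, so there is no real choice to make there. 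This guarantees $M_\pt$ is a subcomplex: it is closed under $d_i$ by admissibility, and closed under $s_i$ because the degeneracy of a chosen representative is the forced (unique) representative of its class.

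Next I would check $M_\pt$ is a Kan complex and is minimal. Minimality is immediate from the construction: if $x,y\in M_n$ have $\p x=\p y$ and $x\sim y$, they lie in the same homotopy class of admissible simplices and we chose only one representative, so $x=y$ — but one must first know that homotopy \emph{inside $M_\pt$} agrees with homotopy in $K_\pt$, which is where the Key-Lemma \ref{lem:homotopy} enters: a homotopy in $K_\pt$ between two simplices of $M$ with equal boundary can be replaced, dimension by dimension using the filler/Key-Lemma machinery, by one living in $M_\pt$. For the Kan property, given a horn in $M_\pt$, fill it in $K_\pt$ to get some $w\in K_{n+1}$; its boundary need not lie in $M$ at the missing face, but the missing face $d_k w$ is admissible (its own faces are faces of the given horn simplices, hence in $M$), so it is homotopic rel its boundary to the chosen representative $m\in M_n$; by the Key-Lemma, replacing $d_kw$ by $m$ keeps the boundary a boundary, i.e.\ there is $w'\in K_{n+1}$ with that corrected boundary, and iterating this correction over all faces (there are finitely many) produces a filler lying entirely in $M_\pt$.

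Then I would build the retraction $r\colon K_\pt\to M_\pt$ together with the homotopy $h\colon r\sim\id_K$ simultaneously by induction on dimension, which is the technical heart of the argument. On $0$-simplices $r$ sends a point to the representative of its path component and $h$ is a choice of $1$-simplex joining them (exists by the Kan property, or rather by the definition of $\pi_0$-representatives). Inductively, given $x\in K_n$ with $r$ and $h$ already defined on $\p x$ (and compatibly on degeneracies), the faces $d_i h(x)$ of the sought-for homotopy $h(x)\colon x\sim r(x)$ are prescribed — the ``side'' faces are $h(d_i x)$, one face is $x$ itself, and the opposite one must be $r(x)$, which we do not yet know — so this is a horn (one checks compatibility from the inductive hypotheses and the simplicial identities), which we fill in $K_\pt$; its remaining face is a simplex homotopic to $x$ with boundary $r(\p x)\in M_{n-1}$, hence admissible, hence homotopic to a unique $M$-representative which we \emph{define} to be $r(x)$; finally we use the Key-Lemma once more to upgrade the filler so that its relevant face really is $r(x)$, giving $h(x)$. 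One must check $r$ is simplicial (commutes with $d_i$ by construction of the horn, with $s_i$ by the uniqueness of degenerate representatives) and that $r_{|M}=\id_M$ (because if $x\in M_n$ then $x$ is already the representative of its class, so $r(x)=x$, and $h(x)$ can be taken degenerate).

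The main obstacle is exactly this last simultaneous induction: organizing the data so that, at each stage, the partially-defined $r$ and $h$ assemble into a genuine horn, verifying compatibility with \emph{both} the face maps and the degeneracies at once, and threading the Key-Lemma through to adjust fillers onto the prescribed faces. Uniqueness of $M_\pt$ up to isomorphism I would handle at the end by a short separate argument: given two minimal retracts $M,M'$, the composite $M\inj K\xrightarrow{r'}M'$ is a simplicial map which on each simplex is a bijection onto homotopy-class-representatives (minimality on both sides forces injectivity and surjectivity dimensionwise), hence an isomorphism — or, more cleanly, one shows a simplicial map between minimal Kan complexes that is a homotopy equivalence is automatically an isomorphism, and applies it to $r'_{|M}\colon M\to M'$.
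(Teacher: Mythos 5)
The paper does not actually give an argument here: its ``proof'' is a pointer to chapter~9 of May's book, and what you have written is essentially a reconstruction of that argument. Your construction of $M_\pt$ (one representative per homotopy class of simplices whose boundary lies in the part already built, with the degenerate representative forced whenever one exists), the verification of minimality and of the Kan property by filling in $K_\pt$ and then correcting faces with the Key-Lemma, and the uniqueness step via ``a homotopy equivalence of minimal complexes is an isomorphism'' are all the standard moves and are sound. In particular your claim that a degenerate $n$-simplex is determined by its boundary is correct (if $s_iy$ and $s_jy'$ have the same faces, the simplicial identities force $s_iy=s_jy'$), so ``choose the degenerate representative'' is unambiguous and makes $M_\pt$ closed under the degeneracy operators.

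The one step that does not survive as written is the simultaneous construction of $r$ and the homotopy $h\colon r\sim\id_K$. A homotopy of simplicial maps in the sense of remark~\ref{rem:homotopies} is a map $K_\pt\times\varDelta^1\to K_\pt$, and over an $n$-simplex $x$ this is a prism consisting of $n+1$ simplices of dimension $n+1$, not a single one. Your own face count betrays the problem: you prescribe the $n+1$ side faces $h(d_0x),\dots,h(d_nx)$ together with the two ends $x$ and $r(x)$, i.e.\ $n+3$ faces for an $(n+1)$-simplex that has only $n+2$. The correct induction extends the map already defined on $(\p\varDelta^n\times\varDelta^1)\cup(\varDelta^n\times\{0\})$ over the whole prism by filling $n+1$ horns in succession, reads $r(x)$ off the last face of the last filler, and then applies the Key-Lemma once more to push that face onto the chosen representative in $M_n$; one must also arrange all these fillers compatibly with the degeneracies. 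This is bookkeeping rather than a new idea, but as stated your single horn is overdetermined and the induction does not literally close.
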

\begin{proof}
	This can be found in May's textbook \cite{May} chapter 9, where he also elaborates other facts about minimal complexes.
\end{proof}

\begin{remark}
	Using remark \ref{rem:tophomgrp} the induced map of $r$ yields an isomorphism
	\[
		r_*: \pi_n(K_\pt,\s) \xrightarrow{\sim} \pi_n(M_\pt,\s).
	\]
	The inverse is given by $i_*$, for $i: M_\pt \inj K_\pt$ the inclusion. This works because $(i\circ r)$ and $(r\circ i)$
	both are homotopic to the identity, thus homotopy-invariance tells us that $(i\circ r)_*$ and $(r\circ i)_*$ have to be the identity.
\end{remark}

\begin{lemma}\label{lem:conmin}
	For every $n$-connected and minimal Kan complex $(K_\pt,\s)$ we have that it is trivial up to dimension $n$: 
	$K_k = \{\s\}$ for all $0\le k \le n$.
\end{lemma}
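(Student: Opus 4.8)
The plan is to prove $K_k = \{\s\}$ by induction on $k$, running from $k=0$ up to $k=n$. Throughout I would use that the basepoint subcomplex contributes exactly one simplex $\s_k$ in each dimension, with $d_i\s_k = \s_{k-1}$ for all $i$, so that $\p\s_k = (\s,\dots,\s)$; in particular $\s_k$ is the only possible element once we know $K_k$ is a singleton, and it always has basepoint boundary.

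For the base case $k=0$, I would note that a $0$-simplex has empty boundary, so the boundary condition in the definition of $\pi_0(K_\pt,\s)$ is vacuous and $\pi_0(K_\pt,\s)$ is simply $K_0/{\sim}$. Since $K_\pt$ is $0$-connected this pointed set is trivial, hence every $x\in K_0$ satisfies $x\sim\s$. Minimality, applied with the (vacuously satisfied) condition $\p x = \p\s$, then forces $x=\s$, so $K_0 = \{\s\}$.

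For the inductive step, assume $1\le k\le n$ and $K_j = \{\s\}$ for all $j<k$. Take an arbitrary $x\in K_k$. Each face $d_i x$ lies in $K_{k-1} = \{\s\}$, so $\p x = (\s,\dots,\s)$, and therefore $x$ represents a class $[x]\in\pi_k(K_\pt,\s)$ (this group is well defined by Theorem \ref{thm:homgrp}). Because $k\le n$ and $K_\pt$ is $n$-connected, $\pi_k(K_\pt,\s) = 0$, so $x\sim\s$. Since $\p x = (\s,\dots,\s) = \p\s_k$, minimality forces $x = \s$. Hence $K_k = \{\s\}$, completing the induction.

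I do not expect a real obstacle here: the statement is essentially a direct unwinding of $n$-connectedness together with minimality, and the induction is needed only so that the faces of a $k$-simplex are already known to be trivial before $\pi_k$ is invoked. The one point deserving a word of care is the degenerate case $k=0$, where "$\p x = \p y$" in the definition of a minimal complex must be read as an empty, hence automatically true, condition, and where "$n$-connected" must be understood to include the $\pi_0$ statement, i.e. that $K_\pt$ has a single path component.
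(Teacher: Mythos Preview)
Your proof is correct and follows exactly the approach the paper indicates: induction on $k$, using $n$-connectedness to obtain $x\sim\s$ and minimality to conclude $x=\s$. The paper only spells out the case $k=0$ and leaves the inductive step to the reader, which you have filled in accurately.
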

\begin{proof}
	This is easily done by induction on $k$. If $k=0$, for example, 
	then every $x\in K_0$ has to be homotopic to $\s$ because $\pi_0(K_\pt,\s)=\{[\s]\}$.
	On the other hand minimality then implies $x=\s$. The rest is left to the reader.
\end{proof}

\section{Simplicial (Co-)Homology}\label{sec:homology}
We now shortly introduce the simplicial homology and cohomology of a simplicial set. 
References to  a more detailed approach can be found in remark \ref{rem:hom}.

\begin{definition}\label{def:sim-hom}
	We define the simplicial homology $ H_n^\text{sim}(K_\pt,L_\pt;A) $ of a simplicial pair $(K_\pt,L_\pt)\in \sPair$ 
	with coefficients in an abelian group $A$ step by step. It is easily verified in each step that the construction is functorial.
	
	For any $n\in\N$ a simplicial $n$-chain $c$ is a formal $A$-linear combination of $n$-simplices:
	\[
		c = a_1 (x_1) + \dots + a_k (x_k) \quad \text{ here } k \in \N_0, a_i \in A \text{ and } x_i \in K_n.
	\]
	After dividing out simplices $y\in L_n$ of the subcomplex, this forms the group:
	\[
		C_n := C_n^\text{sim}(K_\pt,L_\pt;A) := K_n \otimes A / L_n \otimes A 
		= \{ a_1(x_1)+\dots \;|\; \forall a_i, x_i\}\big/ \left\langle \forall y \in L_n: (y) = 0 \right\rangle.
	\]
	Now, the face maps $d_i$ can be used to construct a differential $\de: C_n \to C_{n-1}$:
	\[
		\de_n (x) := \sum_{i=0}^n (-1)^i (d_i x)\text{ where } x \in K_n \quad \text{ and }\;\; \de_n c = a_1 \de_n(x_1) +\dots
	\]
	We define the set of $n$-boundaries $B_n$ and $n$-cycles $Z_n$ to be these subgroups of $C_n$:
	\[
		Z_n = \ker d_n \text{ and } B_n = \image d_{n+1} .
	\]
	The relation $d_id_j x = d_{j-1}d_i x$ for $i\le j \in [n]$ implies $\de^2 = \de_{n-1} \de_n = 0$,
	in other words $B_n = \image d_{n+1} \subset \ker d_n = Z_n$. So the following definition is possible:
	\[
		H_n^\text{sim} (K_\pt,L_\pt;A) = H_n := Z_n / B_n.
	\]
	This defines a sequence of functors from the category of simplicial pairs to that of abelian groups $H_n: \sPair \to \Ab$.
	We write the equivalence class $[(x)]$ in the quotient $Z_n/B_n$,
	simply as $[x]_H$ or even $[x]$. So $[\de y]_H=0$.
\end{definition}

This is a homology theory in the sense of the \emph{Eilenberg-Steenrod axioms}. 
To formulate them, observe that for each simplicial pair we get canonical maps of pairs:
\[
	(Y,\emptyset) \xrightarrow{i_{XY}} (X,\emptyset) \xrightarrow{\gi_{XY}} (X,Y).
\]

\begin{definition}[Eilenberg-Steenrod Axioms]\label{def:hom-th}
	A homology theory is a sequence of homotopy-invariant, covariant functors $H_n : \sPair \to \Ab \text{ for } n \in \N_{0}$ 
	from the category $\sPair$ to the category of abelian groups $\Ab$, satisfying:
	\begin{itemize}
		\item[Axiom 1:] There is an natural map $\partial_n$ such that the following sequence is exact for any $(X,Y)$:
		\[
			\dots \xrightarrow{H_{n+1}(\gi_{XY})} H_{n+1}(X,Y) \xrightarrow{\partial_{n+1}} H_n(Y,\emptyset) 
			\xrightarrow{H_n(i_{XY})} H_n(X,\emptyset) \xrightarrow{H_{n}(\gi_{XY})} H_n(X,Y) \xrightarrow{\partial_n} \dots
		\]
		\item[Axiom 2:] For any family of simplicial sets $(X_i)_{i\in I}$, we get a natural isomorphism:
		\[
			H_n\left(\coprod_{i\in I} X_i,\emptyset\right) \cong \bigoplus_{i\in I} H_n(X_i,\emptyset) .
		\]
		\item[Axiom 3:] There is a group $A$, called the coefficients of $H_n$, such that for the trivial space $\s$
		\[
			H_n(\s,\emptyset) = \begin{cases} 
					A & \text{ if } n = 0\\
					0 & \text{ if } n \ne 0.
			          \end{cases}
		\]
	\end{itemize}
	Similarly a cohomology theory is a sequence of homotopy-invariant, contravariant functors $H^n:\mathcal{CP}air \to \Ab$ 
	satisfying analogues of these axioms.
	In axiom 1 one has to revert all arrows and in axiom 2 the coproducts $\coprod$ and $\bigoplus$ have to be replaced by a product $\prod$.
\end{definition}

\begin{lemma}
	The simplicial homology $H_n^\text{sim} (.,.;A)$ is a homology theory with coefficients $A$ in the sense of Eilenberg and Steenrod.
\end{lemma}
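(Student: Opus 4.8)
The plan is to verify the three Eilenberg--Steenrod axioms (plus homotopy invariance and functoriality, the latter of which was already noted in Definition \ref{def:sim-hom}) directly from the chain-level description. The whole proof is a sequence of standard homological-algebra arguments applied to the simplicial chain complex $C_\pt^\text{sim}(K_\pt, L_\pt; A)$, so I would organise it as four short lemmas run in the following order.

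First, Axiom 3 (dimension): for the trivial complex $\s$ one has $\s_n = \{\s_n\}$ a single (degenerate, for $n>0$) simplex in each dimension, so $C_n^\text{sim}(\s, \emptyset; A) = A$ for all $n$, and the differential $\de_n(\s_n) = \sum_{i=0}^n (-1)^i(\s_{n-1})$ is the identity $A\to A$ for $n$ odd and the zero map for $n$ even. Hence the complex is $\cdots \to A \xrightarrow{0} A \xrightarrow{\sim} A \xrightarrow{0} A$, whose homology is $A$ in degree $0$ and $0$ elsewhere, exactly as required. Second, Axiom 2 (additivity): since $(\coprod_i X_i)_n = \coprod_i (X_i)_n$ and tensoring with $A$ turns disjoint unions of sets into direct sums, we get $C_n^\text{sim}(\coprod_i X_i, \emptyset; A) = \bigoplus_i C_n^\text{sim}(X_i, \emptyset; A)$ compatibly with $\de$, and homology commutes with direct sums of chain complexes (a colimit computation, using that homology is computed pointwise), giving the stated natural isomorphism. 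Third, Axiom 1 (long exact sequence of a pair): by construction there is a short exact sequence of chain complexes
\[
	0 \to C_\pt^\text{sim}(L_\pt, \emptyset; A) \xrightarrow{i_*} C_\pt^\text{sim}(K_\pt, \emptyset; A) \xrightarrow{\gi_*} C_\pt^\text{sim}(K_\pt, L_\pt; A) \to 0,
\]
where exactness on the left is because $L_n \otimes A \inj K_n \otimes A$ (as $L_n \subset K_n$ and $A$ is the base for the free module — or, more carefully, because $L_n$ is a subset of $K_n$ so the free $A$-module on $L_n$ injects into that on $K_n$), and exactness on the right is the definition of the relative chains as a quotient. The snake lemma then produces the natural connecting map $\partial_n$ and the long exact sequence; naturality of the whole sequence follows from naturality of the snake lemma and functoriality of the construction.

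The remaining point, and the one I expect to be the genuine work, is \textbf{homotopy invariance}: if $f \sim g: (K_\pt, L_\pt) \to (K_\pt', L_\pt')$ in the sense of Remark \ref{rem:homotopies} (a simplicial map $H: K_\pt \times \vD^1 \to K_\pt'$ restricting to $f$ and $g$ on the two ends, carrying $L_\pt \times \vD^1$ into $L_\pt'$), then $f_* = g_*$ on $H_n$. The standard route is to build an explicit chain homotopy $P_n: C_n \to C_{n+1}$ with $\de P + P \de = g_* - f_*$, via the Eilenberg--Zilber / prism decomposition: the product $x \times \vD^1$ of an $n$-simplex with the $1$-simplex decomposes into $n+1$ nondegenerate $(n+1)$-simplices, and $P_n(x) := \sum_{i=0}^{n}(-1)^i (s_i^{(1)} x)$ — where $s_i^{(1)} x$ denotes the appropriate degeneracy of $x$ precomposed with $H$ and the $i$-th ``shuffle'' into $\vD^1$ — is the classical formula. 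Checking the identity $\de P + P \de = g_* - f_*$ is a telescoping computation with the simplicial identities from Definition \ref{def:simset}; this is the one place where the combinatorics is unavoidable, so I would either carry it out carefully in a separate lemma or — in the spirit of this paper — reduce it by noting that, via the geometric realization and Theorem \ref{thm:|S|} together with the comparison result of section \ref{sec:cohom}, simplicial homology agrees with singular homology of $|K_\pt|$, for which homotopy invariance is classical; but since the excerpt places this lemma before that comparison, the honest thing is the prism argument. Once homotopy invariance and the three axioms are in place, the lemma is proved, with coefficients $A$ read off from Axiom 3.
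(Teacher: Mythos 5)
Your proposal is correct and follows essentially the same route as the paper: the paper likewise handles homotopy invariance by constructing a chain homotopy from the simplicial homotopy (the prism operator, with details deferred) and defines the connecting map by $\partial_n[x]=\de[x]$, which is exactly what your snake-lemma argument on the short exact sequence of chain complexes produces, while the dimension and additivity axioms are as routine as you indicate. One small slip worth fixing: in the dimension axiom the differential $\de_n(\s_n)=\bigl(\sum_{i=0}^n(-1)^i\bigr)\s_{n-1}$ is the identity for $n$ \emph{even} and zero for $n$ \emph{odd} (you state the opposite parity), although the chain complex you then display, and hence the resulting computation of $H_*(\s)$, is the correct one.
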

\begin{proof}
	Homotopy-invariance can easily be deduced from the well-known one for the homology of chain complexes.
	To do so, one has to construct a chain-homotopy out of the simplicial homotopy.
	The homomorphism $\partial_n:H_n(X,Y)\to H_{n-1}(Y)$ can be set to
	\[
		\partial_n [x] = \de [x] \in Y_n \otimes A \text{ for } x \in Z_n(X,Y;A)
	\]
	which is well-defined, because we know $ \de[x] = 0 \mod Y_n \Rightarrow \de[x] \in Y_n \otimes A=C_n(Y,\emptyset;A)$.
	
	More details can be found in the references in remark \ref{rem:hom}.
\end{proof}

\begin{remark}\label{rem:hom}
	For any pair of topological spaces $(X,Y)$ the singular homology is usually defined to be
	the simplicial homology of its singular complex:
	\[
		H_n^\text{sing} (X,Y;A) := H_n^\text{sim} (S_\pt(X),S_\pt(Y);A)
	\]
	This is a homology theory on the category of topological spaces. As this is a common way to define homology in the literature,
	one can find a lot of information on this in the most algebraic topology textbooks.
	For example, the approach of Switzer \cite{Sw}[ch. 10] perfectly fits our needs. 
	More details can be found in chapter two of Hatcher's book \cite{Ha}.
	
	Using the equivalence $S_\pt(|K_\pt|) \sim K_\pt$ from \ref{thm:|S|} and the homotopy invariance of $H$,
	we see that the simplicial homology is compatible with the geometric realization as well: 
	\[
		H_n^\text{sim} (K_\pt,L_\pt;A) \cong H_n^\text{sim} (S_\pt(|K_\pt|),S_\pt(|L_\pt|);A) 
		\stackrel{def}{=} H_n^\text{sing} (|K_\pt|,|L_\pt|;A).
	\]
\end{remark}

\begin{definition}
	In a similar manner, the simplicial cohomology $H_\text{sim}^n (K_\pt,L_\pt;A)$ is defined:
	Here an  $n$-cochain $\ga$ is a map, assigning each $n$-simplex of $K_\pt$ an element of $A$, with the equivalence relation that
	two cochains are identified, if they agree on $K_n \setminus L_n$:
	\[
		C^n := \Hom(K_n \otimes \Z,A) \big/ \Hom(L_n \otimes \Z, A) = \Hom_\text{Set}(K_n \setminus L_n, A) .
	\]
	The codifferential $\de^n:C^n\to C^{n+1}$ is $\de^n c = c \circ \de_n$. Cocycles are $Z^n := \ker d^n$, coboundaries are $B^n := \im d^{n-1}$
	and finally the cohomology is:
	\[
		 H_\text{sim}^n (K_\pt,L_\pt;A) = H^n := Z^n / B^n.   
	\]
\end{definition}

\begin{remark}
	All the duals of remark \ref{rem:hom} hold. Especially this is a cohomology theory 
	and it agrees via $S_\pt$ and $|.|$ with the singular cohomology on $\Top$.
	
	If we do not specify $A$ or 'sim', we mean the simplicial (co-)homology with coefficients in $\Z$, 
	if $L_\pt$ is not mentioned, $L_\pt = \emptyset$ is understood:
	\[
		H_n(K_\pt,L_\pt) = H_n^\text{sim} (K_\pt,L_\pt;\Z) \;\;\text{ and } H_n(K_\pt) = H_n^\text{sim} (K_\pt,\emptyset;\Z)
	\]
\end{remark}

\section{The Hurewicz Homomorphism}

\begin{definition}
	The Hurewicz homomorphism $\gp:\pi_n(K_\pt,L_\pt,\s) \to H_n(K_\pt,L_\pt)$ or 
	$\gp:\pi_n(K_\pt,\s) \to H_n(K_\pt,\s_\pt)$ is defined by: $ \gp([x]_\pi) := [x]_H $ .
\end{definition}

\begin{lemma}\label{lem:Hur}
	The Hurewicz map is a well-defined and natural group homomorphism.  
\end{lemma}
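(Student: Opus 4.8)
The plan is to check three things separately: that $\gp$ is well-defined, that it is a homomorphism, and that it is natural. For well-definedness I would take two homotopic representatives $x \sim y$ (or $x \sim_{\op{rel}L} y$ in the relative case) and show $[x]_H = [y]_H$. In the absolute case, a homotopy $h : x \sim y$ is an $(n+1)$-simplex with $\p h = (s_{n-1}d_0x, \dots, s_{n-1}d_{n-1}x, x, y)$. Computing $\de_{n+1}(h) = \sum_{i=0}^{n+1}(-1)^i(d_ih)$ gives an alternating sum of the degenerate faces $s_{n-1}d_ix$ together with $(-1)^n(x) + (-1)^{n+1}(y)$. The first $n$ terms are all degenerate; since our representatives have $\p x = (\s, \dots, \s)$, each $d_ix = \s$, so those faces are the degenerate simplex $\s_n$, and in the homology of $(K_\pt, \s_\pt)$ (where we quotient by the subcomplex $\s_\pt$) they vanish in $C_n$. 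Hence $\de_{n+1}(h) = \pm((x) - (y))$ in $C_n$, so $(x) - (y) \in B_n$ and $[x]_H = [y]_H$. In the relative case one does the same computation with $\p h = (h_L, s_{n-1}d_1x, \dots, s_{n-1}d_{n-1}x, x, y)$: the degenerate faces again die in $C_n(K_\pt, L_\pt)$, and the term $d_0h = h_L \in L_n$ dies because we quotient by $L_n$, so again $\de_{n+1}(h) \equiv \pm((x)-(y))$ modulo $B_n$ and modulo $L_n$.

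Next, for the homomorphism property, recall that $[x][y] = [xy]$ is witnessed by an $(n+1)$-simplex $w$ completing the horn $(\s, \dots, \s, y, \underline{xy}, x)$ (respectively $(h, \s, \dots, \s, y, \underline{xy}, x)$ with $h \in L_{n-1}$ in the relative case). So $\p w = (\s, \dots, \s, y, xy, x)$, and computing $\de_{n+1}(w)$ in $C_n(K_\pt, \s_\pt)$: the $n-1$ star-faces vanish, leaving $(-1)^{n-1}(y) + (-1)^n(xy) + (-1)^{n+1}(x)$. Thus $(xy) \equiv \pm((x) + (y))$ modulo $B_n$; after fixing the sign conventions one gets $[xy]_H = [x]_H + [y]_H$, i.e. $\gp([x][y]) = \gp([x]) + \gp([y])$. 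The relative case is identical except that the extra face $d_0w = h \in L_{n-1}$ is killed in $C_n(K_\pt, L_\pt)$. (For $n = 0, 1$ one should note $\gp$ is just a map of sets or — for $n=1$ — passes to the abelianization, but the statement as phrased only asks for a group homomorphism where the groups are defined, i.e. $n \ge 1$, and abelian only for $n \ge 2$; the computation above still gives additivity on the level of homology classes.)

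Finally, naturality: for a map $f : (K_\pt, L_\pt, \s) \to (K_\pt', L_\pt', \s)$ we must check $\gp \circ f_* = f_* \circ \gp$. Unwinding both sides on a class $[x]_\pi$: $f_*[x]_\pi = [f(x)]_\pi$ and then $\gp[f(x)]_\pi = [f(x)]_H$, while $\gp[x]_\pi = [x]_H$ and the induced map on homology sends $[x]_H \mapsto [f(x)]_H$ since the chain map $C_n(f)$ is defined by $(x) \mapsto (f(x))$. These agree on the nose, so naturality is immediate once one knows (from Definition \ref{def:sim-hom}) that simplicial homology is functorial and (from Theorem \ref{thm:homgrp}) that the homotopy groups are functorial.

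I expect the only real subtlety to be \textbf{bookkeeping of the degenerate and subcomplex faces}: verifying carefully that every face appearing in $\de_{n+1}(h)$ or $\de_{n+1}(w)$ other than $(x)$, $(y)$, $(xy)$ is either a degenerate simplex over $\s$ (hence zero in $C_n(K_\pt, \s_\pt)$) or lies in $L_n$ (hence zero in $C_n(K_\pt, L_\pt)$), and keeping track of the signs $(-1)^i$ so the final identity comes out with the correct sign. None of this is deep, but it is exactly the kind of "involved combinatorial structure" the introduction warns about, and the matrix-style organization used earlier in the paper is the natural way to present it cleanly.
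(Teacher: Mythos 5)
Your proposal is correct and follows essentially the same route as the paper: apply $\de$ to the homotopy simplex $h$ (resp.\ the composition witness $w$), observe that every face other than $x$, $y$ (resp.\ $xy$) lies in the subcomplex $\s_\pt$ or $L_\pt$ and hence vanishes in the relative chain group, and conclude $[x]_H=[y]_H$ (resp.\ $[xy]_H=[x]_H+[y]_H$), with naturality immediate from functoriality of both sides. The only point the paper records that you leave implicit is the preliminary check that a representative $x$ is itself a homological cycle, i.e.\ $\de(x)=0$ in $C_{n-1}(K_\pt,L_\pt;A)$ because all faces of $x$ lie in $L_{n-1}$ or are $\s$ — a one-line verification already contained in your own observation that each $d_ix$ dies in the quotient.
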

\begin{proof}
	Every representative $x$ for an element $[x]_\pi \in\pi_n(K_\pt,L_\pt,\s)$ of the $n$th homotopy group is an $n$-cycle in homology:
	$\p x = (l,\s,\dots,\s)$ implies $\de [x] = [l] - \sum \pm [\s] = 0$ since $[l]=[\s]=0$ because $l,\s \in L_n$.
	
	If $h:x\sim_{\op{rel}L} y$ is a homotopy, then $\p h = (l,\s,\dots,\s,x,y)$ implies 
	\[ 
		\de [h] = [l] -  [\s]+\dots \pm[\s] \mp [x] \pm [y] = \mp ([x] - [y])
	\] 
	and since any boundary is $0$ in homology, we get $\pm([x]_H-[y]_H)=0 \Rightarrow [x]_H=[y]_H$. So $\gp$ is well-defined.
	Similarly it is a group homomorphism, because $\p h = (\dots,\s,y,xy,x)$ leads to $[x]_H+[y]_H=[xy]_H$.
	Naturality is clear.
\end{proof}

\begin{theoreme}\label{thm:hur}
	If $K_\pt$ is trivial in the dimensions lower than $n$: $K_{n-1} = \{\s\}$, 
	then the Hurewicz homomorphism is an isomorphism between the $n$th homotopy group and the $n$th homology group for $n\ge 2$.
	In the case $n=1$ one has to take the abelization $\Ab(\pi_1(K_\pt,\s))$ of the first homotopy group.
\end{theoreme}

\begin{corollary}[Hurewicz Theorem]
	(1): For any $(n-1)$-connected Kan complex $(K_\pt,\s)$ the Hurewicz homomorphism is an isomorphism in dimension $n$.
	
	(2): The usual Hurewicz theorem also holds:
	For any $(n-1)$-connected topological space $(X,\s)$ the Hurewicz homomorphism is an isomorphism in dimension $n$:
	\[
		\gp: \Ab(\pi_n(K_\pt,\s)) \xrightarrow{\sim} H_n(K_\pt,\s_\pt) 
		\; \text{ and } \;\gp: \Ab(\pi_n(X,\s)) \xrightarrow{\sim} H_n(X,\s_\pt).
	\]
\end{corollary}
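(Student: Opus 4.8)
The plan is to bootstrap the Corollary from Theorem~\ref{thm:hur} by passing to a minimal subcomplex. For part~(1) I would start from an $(n-1)$-connected Kan complex $(K_\pt,\s)$ and invoke Lemma~\ref{lem:minsub}, applied in the pointed setting so that $\s\in M_\pt$: this gives a minimal subcomplex $M_\pt\subset K_\pt$ with an inclusion $i\:M_\pt\inj K_\pt$ and a retraction $r\:K_\pt\to M_\pt$ satisfying $ri=\id_M$ and $ir\sim\id_K$. Homotopy-invariance of $\pi_n$ (remark~\ref{rem:tophomgrp}) and of basepoint-relative simplicial homology (which is built into its being a homology theory in the sense of Eilenberg and Steenrod) then makes $i_*$ and $r_*$ mutually inverse isomorphisms on $\pi_n$ and on $H_n(\,\cdot\,,\s_\pt)$ at the same time. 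Since $\gp$ is natural (Lemma~\ref{lem:Hur}), the square relating it for $K_\pt$ and for $M_\pt$ commutes, so it is enough to prove the statement for $M_\pt$.

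Now $M_\pt$ is again $(n-1)$-connected, because $\pi_k(M_\pt,\s)\cong\pi_k(K_\pt,\s)=0$ for $k\le n-1$, and it is minimal, so Lemma~\ref{lem:conmin} yields $M_k=\{\s\}$ for all $k\le n-1$; in particular $M_{n-1}=\{\s\}$. Hence Theorem~\ref{thm:hur} applies verbatim to $M_\pt$ and shows that $\gp$ is an isomorphism $\pi_n(M_\pt,\s)\xrightarrow{\sim}H_n(M_\pt,\s_\pt)$ for $n\ge 2$, and $\Ab(\pi_1(M_\pt,\s))\xrightarrow{\sim}H_1(M_\pt,\s_\pt)$ for $n=1$. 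Transporting this isomorphism back along $r_*$ (and along $\Ab$ of it when $n=1$) proves part~(1).

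For part~(2) I would pass to the singular complex: for an $(n-1)$-connected space $(X,\s)$, the complex $S_\pt(X)$ is a Kan complex (remark~\ref{rem:singcomp}) and is itself $(n-1)$-connected, since $\pi_k(S_\pt(X),\s)=\pi_k^\Top(X,\s)=0$ for $k\le n-1$. Because the topological homotopy and homology groups, and the Hurewicz map between them, are by definition the corresponding simplicial data of $S_\pt(X)$ (remarks~\ref{rem:tophomgrp} and~\ref{rem:hom}), part~(2) is exactly part~(1) applied to $K_\pt=S_\pt(X)$; Theorem~\ref{thm:|S|} is what guarantees this coincides with any other model of the topological invariants. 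Granting Theorem~\ref{thm:hur}, there is no serious obstacle in the Corollary itself; the one point that genuinely leans on the earlier development is that the homotopy retraction of Lemma~\ref{lem:minsub} induces an isomorphism on basepoint-relative homology as well as on $\pi_n$, which rests on homotopy-invariance of simplicial homology with respect to the simplicial homotopies of remark~\ref{rem:homotopies} together with a choice of the minimal subcomplex compatible with the basepoint.
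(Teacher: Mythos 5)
Your proposal is correct and follows essentially the same route as the paper: reduce part (1) to Theorem~\ref{thm:hur} by passing to a minimal subcomplex via Lemma~\ref{lem:minsub}, use homotopy-invariance of $\pi_n$ and $H_n$ together with Lemma~\ref{lem:conmin} to see that the subcomplex is trivial below dimension $n$, and obtain part (2) by applying part (1) to the singular complex. Your version is if anything slightly more careful than the paper's, spelling out the naturality square for $\gp$ and the need to choose the minimal subcomplex containing the basepoint.
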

\begin{proof}
	(2) immediately follows from (1), because the homotopy and homology groups of a topological space
	are by definition those of its singular complex.
	
	(1) can be reduced to theorem \ref{thm:hur} by choosing a minimal subcomplex $M_\pt \subset K_\pt$
	via lemma \ref{lem:minsub}. Then $M_\pt$ is $(n-1)$-connected, due to the homotopy-invariance of the homotopy groups.
	As we have seen in lemma \ref{lem:conmin} this implies $M_{n-1} = \{\s\}$, since $M_\pt$ is minimal.
	So we can apply the theorem to $M_\pt$ and translate the result back to $K_\pt$ because the homology groups are 
	homotopy-invariant as well.
	
	Here we make essential use of homotopies, homotopy-invariance and minimal complexes,
	although we have not worked out these concepts. Yet, as mentioned in remark \ref{rem:homotopies}, 
	they can be found in \cite{May} and are not too hard to understand.
\end{proof}

\begin{proof}[of theorem \ref{thm:hur}]
	From lemma \ref{lem:Hur} we know that $\gp$ is a group homomorphism. Assume that $\pi_n(K_\pt,\s)$ is abelian, 
	the case $n=1$ is then a slight generalization.\\
	
	\textit{Surjective:} The $n$th homology of $K_\pt$ is generated by $[x]_H$ for $x\in K_n$. The condition $K_{n-1} = \{\s\}$
	forces $\p x$ to be $(\s,\dots,\s)$. So $x$ can also be regarded as a representative in $\pi_n(K_\pt,\s)$ and of course 
	$\gp([x]_\pi) = [x]_H$.\\
	
	\textit{Injective:} It is enough to show that '$\de y = 0$' holds in the homotopy group for $y\in K_{n+1}$. 
	This means that we have to show that 
	\[
		[d_0y]_\pi - [d_1y]_\pi \pm \dots = [\s]_\pi = 0.
	\]
	For $n=1$ this is just the definition of addition, because $\p y=(a,b,c)$ witnesses $[a]_\pi + [c]_\pi = [b]_\pi$.
	So we need the following generalization:
	
	\textit{Claim:} $(\dots,\s,x_k,\dots,x_{n+1})$ is filled if and only if $[x_k]_\pi - [x_{k+1}]_\pi \dots = 0$.
	The proof is done by decreasing induction on $k$. The base $k=n-1$ works just as $n=1$ above.
	
	To get the inductive step we regard the following matrix:
	\[
	\left(
		\begin{matrix}
			\ddots 	\\	\dots 	\\ 	\dots 	\\   	\dots 	\\	\dots 	\\	\dots 	\\  \
		\end{matrix}\quad
		\begin{matrix}
			\vdots 	\\	\s 	\\ 	\s 	\\ 	a	\\ 	a	\\	\s 	\\  \vdots 
		\end{matrix}\quad
		\begin{matrix}
			\vdots 	\\	\s 	\\ 	a'b	\\ 	b	\\ 	a	\\	\s 	\\  \vdots 
		\end{matrix}\quad
	\color{blue}
		\begin{matrix}
			\vdots 	\\	\s 	\\ 	a'b	\\ 	x_{k+2} \\ 	x_{k+3}	\\	x_{k+4}	\\ \vdots
		\end{matrix}\quad
		\begin{matrix}
			\vdots 	\\	a	\\ 	b	\\ 	x_{k+2} \\ 	x_{k+3}	\\	x_{k+4}	\\ \vdots
		\end{matrix}
	\color{black}\quad
		\begin{matrix}
			\vdots 	\\	a	\\ 	a	\\ 	x_{k+3} \\ 	x_{k+3}	\\	\s 	\\  \vdots
		\end{matrix}\quad
		\begin{matrix}
			\vdots 	\\	\s 	\\ 	\s 	\\ 	x_{k+4} \\ 	x_{k+4}	\\	\s 	\\  \vdots
		\end{matrix}\quad
		\begin{matrix}
			\ 	\\	\dots 	\\ 	\dots 	\\ 	\dots 	\\ 	\dots	\\	\dots 	\\  \ddots
		\end{matrix}
	\right) 
	\]	
	Here $a'$ is a representative of $-[a]$. By induction $(\dots, \s, a'b, b, a, \s, \dots)$ is filled.
	We get that $ (\dots,\s,x,x,y,y,\s,\dots)$ is a boundary by generalizing the argument in
	the proof that the higher homotopy groups are abelian, see theorem \ref{thm:homgrp}.  
	
	The matrix now shows that the cycle $(\dots,\s,a,b,x_{k+2},\dots)$ is filled 
	if and only if the cycle $(\dots,\s,\s,a'b,x_{k+2},\dots)$ is filled.
	The latter is, by induction, equivalent to $[a'b]_\pi-[x_{k+2}]_\pi\pm\dots = 0$. 
	This completes the induction, because 
	\[
		[a'b]_\pi-[x_{k+2}]_\pi\pm\dots = -[a]_\pi+[b]_\pi-[x_{k+2}]_\pi\pm\dots\;.
	\]
\end{proof}

\section{Eilenberg-Mac Lane-Spaces}\label{sec:EML}
Here, we will introduce a trivial completion for $n$-skletons. 
It can be used to construct Kan complexes $K_\pt$ with trivial higher homotopy groups.
As a special case, one gets the so-called Eilenberg-Mac Lane spaces, which only have one non-trivial homotopy group.

\begin{definition}\label{def:skeleton}
	A simplicial $n$-skeleton $S_\pt$ is a finite sequence of sets $S_0,\dots,S_n$ with face and degeneracy operators satisfying 
	the rules known from the definition of simplicial sets. 
	Simplicial maps between such $n$-skeletons have to commute with the operators. 
	This defines the category $\sSk_n$.
	
	A completion of such an $n$-skeleton is a simplicial set $K_\pt$ continuing
	the sequence: $K_i = S_i$ for $i\in [n]$, such that the operators are the same. 
	
	An $n$-skeleton is said to have the Kan property,
	if for $k<n$ every $k$-horn is filled and any $n$-horn can be completed to an $n$-cycle.
	Such an $n$-skeleton is called minimal, if it satisfies $x\sim y \Rightarrow x=y$, as for the minimal complexes,
	and if  in addition for $x,y\in S_n$ $n$-simplices $\p x = \p y$ implies $x=y$.
\end{definition}

\begin{remark}
	There is an restriction functor $R_n:\sSet \to \sSk_n$ that simply forgets about all dimensions higher than $n$.
	We can find a adjoint functor $\hat{\cdot}:\sSk_n\to \sSet$, called the completion functor, which extends any $n$-skeleton in a canonical way:
\end{remark}

\begin{lemma}\label{lem:completion}
	There is a canonical completion $\hat{S}_\pt$ for any simplicial $n$-skeleton $S_\pt$, such that if $S_\pt$ satisfies the Kan property, 
	$\hat{S}_\pt$ does so and in addition it has trivial higher homotopy groups: $\pi_k(\hat{S}_\pt,\s) = 0$ for $k\ge n$.
	The completion preserves minimality.
	
	
	This yields a completion functor $\hat{\cdot}:\sSk_n\to \sSet$,
	which is right-adjoint to the restriction functor:
	For any simplicial set $K_\pt$ and skeleton $S_\pt$, there is a natural isomorphism:
	\[
		\Hom_\sSet(K_\pt,\hat{S}_\pt) \cong \Hom_{\sSk_n}(R_nK_\pt,S_\pt).
	\]
\end{lemma}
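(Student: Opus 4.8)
The plan is to construct $\hat{S}_\pt$ dimension by dimension, filling in the simplices above dimension $n$ in the only way the adjunction allows. Concretely, I would define $\hat{S}_i := S_i$ for $i \le n$, and for $i > n$ set $\hat{S}_i$ to be the set of all simplicial maps $\vD^i \to \hat{S}_\pt$ that have already been pinned down on the $(i-1)$-skeleton — equivalently, compatible families of $(i-1)$-simplices indexed by the faces of $\vD^i$, i.e. $(i-1)$-cycles in the partial complex built so far. This is forced: an $i$-simplex of any completion restricts to such a cycle on its boundary, and the universal property demands we put in exactly one simplex for each such cycle, namely the cycle itself (regarded as a ``formal filling''). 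The face and degeneracy operators on $\hat{S}_i$ are then read off from the components of the cycle, and the simplicial identities for $\hat{S}_\pt$ follow from those already known in $S_\pt$ together with the compatibility conditions defining a cycle. First I would check this is well-defined and gives a simplicial set, which is bookkeeping but not hard.

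Next I would verify the adjunction $\Hom_\sSet(K_\pt,\hat{S}_\pt) \cong \Hom_{\sSk_n}(R_nK_\pt,S_\pt)$. The restriction map sends $f:K_\pt\to\hat{S}_\pt$ to $R_nf$, which lands in $S_\pt$ since $\hat{S}_i = S_i$ for $i\le n$. For the inverse, given $g:R_nK_\pt\to S_\pt$ I extend it inductively: if $g$ is already defined on $K_i$ for some $i\ge n$, then for $x\in K_{i+1}$ the boundary $\p x=(d_0x,\dots,d_{i+1}x)$ maps under $g$ to an $i$-cycle in $\hat{S}_\pt$, i.e. an element of $\hat{S}_{i+1}$ by construction, and I set $g(x)$ to be that element. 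Commutation with $d_j$ is immediate from the definition of the operators on $\hat{S}_\pt$; commutation with $s_j$ uses the simplicial identities and the fact that the extension is forced on degeneracies as well. The two constructions are mutually inverse by the same inductive dimension count, and naturality in both variables is routine.

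For the homotopy and Kan statements I would argue as follows. If $S_\pt$ has the Kan property in the sense of Definition \ref{def:skeleton}, then every horn in $\hat{S}_\pt$ in dimension $\le n$ is already filled inside $S_\pt$; in dimension $k \ge n$ a $(k,j)$-horn is a compatible family of $(k-1)$-simplices missing the $j$-th, which (using that $k-1\ge n$, so everything in sight is governed by the ``cycle = simplex'' description, or that $k-1 = n$ and the $n$-skeleton Kan condition supplies the missing face as an $n$-cycle) can be completed to a full $(k-1)$-cycle, and that cycle \emph{is} a $k$-simplex of $\hat{S}_\pt$ filling the horn. So $\hat{S}_\pt$ is a Kan complex. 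For $\pi_k(\hat{S}_\pt,\s)=0$ when $k\ge n$: a representative is a $k$-simplex $x$ with $\p x=(\s,\dots,\s)$; but a $k$-simplex with $k> n$ is by construction determined by its boundary, so $x$ is the unique such simplex, and for $k\ge n$ one checks it is homotopic to the degenerate simplex $s_k\s$ — indeed for $k>n$ any two simplices with equal boundary coincide (hence $\hat S_\pt$ is ``minimal above dimension $n$''), which also shows the completion preserves minimality and immediately gives $x = s_{k-1}\s \sim \s$; the case $k=n$ is the one place one must feed in the $n$-skeleton Kan property to build the homotopy, exactly as in the Hurewicz-type arguments of Section~\ref{sec:homology}.

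The main obstacle I expect is the boundary case $k = n$: above dimension $n$ the complex is rigid (simplices are their own boundaries) so everything is essentially formal, but at dimension $n$ one must genuinely use that $n$-horns in $S_\pt$ complete to $n$-cycles, and check that the formal $(n+1)$-simplices one has adjoined actually supply the homotopies witnessing $\pi_n$-triviality and that they interact correctly with the already-existing structure of $S_\pt$. Getting the indexing of the auxiliary cycles right there — rather than in the purely formal range $k>n$ — is where the care is needed.
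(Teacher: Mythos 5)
Your construction is the same as the paper's: for $k>n$ the $k$-simplices of $\hat{S}_\pt$ are exactly the $(k-1)$-cycles with face maps given by projection, the adjunction is verified by the same forced inductive extension, and the Kan property is obtained by completing horns to cycles (skeleton Kan property at level $n$, formal rigidity above). The proposal is correct and matches the paper's proof in all essentials; the only cosmetic difference is that triviality of $\pi_k$ for $k\ge n$ (including $k=n$) already follows formally because the homotopy cycle $(\s,\dots,\s,x,\s)$ is itself filled by construction, so no extra input from the skeleton Kan property is needed there.
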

\begin{proof}
	Let $S_\pt$ be any $n$-skeleton. We then define the completion $K_\pt=\hat{S}_\pt$ by iteratively filling all the $k$-cycles 
	with exactly one simplex of dimension $k+1$ for $k\ge n$:
	\begin{align*}
		K_{k} :=& \{c=(x_0,\dots,x_k) \in K_{k-1}^{k+1} \;|\; \forall i<j\in[k]\: d_ix_j = d_{j-1}x_i \} 
			= \{c \;|\;c\;(k-1)\text{-cycle in } K_\pt\},\\
		&d_i (x_0,\dots,x_k) = x_i \text{ and } s_i c = (s_{i-1} d_0 c, \dots, s_{i-1} d_{i-1} c, c, c, s_{i} d_{i+1} c, \dots, s_i d_k c).
	\end{align*}
	The rules for the simplicial operators are fulfilled. For example by the definition of $s_i$, $\p s_i$ has the right form
	and we have $d_id_j c = d_i x_j = d_{j-1} x_i = d_{j-1} d_i c$.
	
	If $S_\pt$ is Kan, we are interested in the Kan property of the completion. 
	Firstly, observe that for $k\ge n$ by definition every $k$-cycle is filled.
	Thus, to fill an $(k,l)$-horn $(x_0,\dots,\bel,\dots,x_{k+1})$  we just have to complete it to a $k$-cycle. 
	If $k=n$, the Kan property of the skeleton gives such a completion to a cycle.
	For $k>n$, we simply fill 
	\[
		(d_{l-1} x_0, \dots, d_{l-1} x_{l-1},d_l x_{l+1}, \dots, d_l x_{k+1}).
	\]
	by some $x_l$. Then $(x_0,\dots,x_l,\dots,x_{k+1})$ is a $k$-cycle and thus filled.
	
	It is clear that minimality is preserved in the dimensions lower than $n$. For the $n$th dimension we get it,
	because here by the skeleton-minimality no two different simplices have the same boundary. 
	In higher degrees the same rule holds due to the construction.
	
	The construction is clearly functorial. 
	To check the adjointness of the functors we have to uniquely extend any $f:R_n L_\pt \to S_\pt$ to a map $L_\pt\to\hat{S}_\pt$. 
	This is done, by defining 
	\[
		f(x):=f^{\times (k+2)}(d_0 x, \dots, d_{k+1} x) = (f(d_0 x),\dots,f(d_{k+1} x)) \text{ for } x\in L_{k+1}, \; k\ge n.
	\]
	This is also mandatory to make $f$ compatible with the face operator, implying uniqueness.
\end{proof}

\begin{definition}
	A Kan complex $(K_\pt,\s)$ is called Eilenberg-Mac Lane-space in dimension $n\ge 1$ for some abelian group $A$, if it
	satisfies 
	\[
		\pi_k(K_\pt,\s) = 
			\begin{cases}
				A 	& \text{ if } k = n	\\
				0	& \text{ if } k \ne n 	.
			\end{cases}
	\]
\end{definition}

\begin{lemma}
	There is a minimal Eilenberg-Mac Lane-space $H(A,n)_\pt$ for any dimension $n\ge 1$ and any abelian group $A$. 
	It is unique up to isomorphism.
\end{lemma}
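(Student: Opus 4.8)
The plan is to build $H(A,n)_\pt$ as the trivial completion of an explicit minimal $(n+1)$-skeleton (Lemma \ref{lem:completion}), and to derive uniqueness by comparing an arbitrary minimal model with it through the adjunction of that lemma.

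\emph{Existence.} Define an $(n+1)$-skeleton $S_\pt$ by $S_k := \{\s\}$ for $k<n$, by $S_n := A$ (with $\s := 0$, every face map the constant map $A\to\{\s\}$, and each degeneracy $\s\mapsto 0$), and by
\[
	S_{n+1} := \bigl\{(a_0,\dots,a_{n+1})\in A^{n+2} \bigm| \textstyle\sum_{i=0}^{n+1}(-1)^i a_i = 0\bigr\},
\]
where a tuple $z=(a_0,\dots,a_{n+1})$ is viewed as the $(n+1)$-simplex with $d_i z := a_i$ and $s_i\colon S_n\to S_{n+1}$ sends $a$ to the tuple with $a$ in places $i,i+1$ and $0$ elsewhere (which lies in $S_{n+1}$). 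One checks that the simplicial identities hold in this range; that $S_\pt$ has the skeletal Kan property — an $n$-horn is filled by solving uniquely for the missing entry so that the alternating sum vanishes, and the completion of an $(n+1)$-horn again lies in $S_{n+1}$ by a short computation with the shifted mirror symmetry of its boundary matrix; and that $S_\pt$ is a minimal skeleton (distinct elements of $S_n$ are non-homotopic since $(0,\dots,0,a,b)\in S_{n+1}$ forces $a=b$, while elements of $S_{n+1}$ are literally their own boundaries). Put $H(A,n)_\pt := \hat S_\pt$. By Lemma \ref{lem:completion} this is a minimal Kan complex with $\pi_k(\hat S_\pt,\s)=0$ for $k\ge n+1$; it is trivial in dimensions $<n$, so $\pi_k=0$ there too; and $\pi_n(\hat S_\pt,\s)$ has underlying set $S_n=A$ with $a\sim b \Leftrightarrow (0,\dots,0,a,b)\in S_{n+1} \Leftrightarrow a=b$, while the composition, read off from $(0,\dots,0,b,\underline{ab},a)\in S_{n+1}$, forces $ab=a+b$. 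Hence $\pi_n(H(A,n)_\pt,\s)\cong A$ and $H(A,n)_\pt$ is a minimal Eilenberg--Mac Lane space.

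\emph{Uniqueness.} Let $K_\pt$ be any minimal Eilenberg--Mac Lane space for $(A,n)$. It is $(n-1)$-connected and minimal, so $K_k=\{\s\}$ for $k<n$ by Lemma \ref{lem:conmin}; since moreover $K_\pt$ is minimal with trivial $(n-1)$-skeleton, the quotient $K_n\to\pi_n(K_\pt,\s)=A$ is a bijection. By the argument in the proof of Theorem \ref{thm:hur}, an $n$-cycle in $K_\pt$ is a boundary exactly when its alternating sum vanishes in $\pi_n$, so $z\mapsto\p z$ sends $K_{n+1}$ into the set of alternating-sum-zero tuples. These data assemble into a map of $(n+1)$-skeletons $\psi\colon R_{n+1}K_\pt\to S_\pt$ — the identity below dimension $n$, the bijection $K_n\cong A$ in dimension $n$, and $z\mapsto\p z$ in dimension $n+1$ — with compatibility with faces and degeneracies immediate. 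By the adjunction of Lemma \ref{lem:completion}, $\psi$ corresponds to a simplicial map $\tilde\psi\colon K_\pt\to\hat S_\pt=H(A,n)_\pt$, which by construction induces the identity $A\to A$ on $\pi_n$ and the (trivial) isomorphism on every other $\pi_k$. Since $K_\pt$ and $H(A,n)_\pt$ are minimal Kan complexes and a map between minimal Kan complexes inducing isomorphisms on all homotopy groups is itself an isomorphism (the rigidity of minimal complexes, \cite{May} ch.~9, underlying also Lemma \ref{lem:minsub}), $\tilde\psi$ is an isomorphism.

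The verifications on $S_\pt$ (simplicial identities, the skeletal Kan property, minimality) are mechanical, so the genuine obstacle is the uniqueness step: it rests on the rigidity of minimal Kan complexes — that a $\pi_\ast$-isomorphism between them is an isomorphism — which is not developed here and must be imported from \cite{May}, and on the fact, supplied by the Hurewicz argument, that the image of $\p\colon K_{n+1}\to A^{n+2}$ is precisely the alternating-sum-zero tuples, without which the comparison map $\psi$ is not even defined.
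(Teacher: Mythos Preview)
Your existence argument is essentially identical to the paper's: both build the same explicit minimal $(n+1)$-skeleton with $S_n=A$ and $S_{n+1}$ the alternating-sum-zero tuples, verify the skeletal Kan property (the $(n+1)$-horn case via the shifted-mirror symmetry computation), and apply Lemma~\ref{lem:completion}.

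For uniqueness you go further than the paper, which simply defers to May (ch.~23) and hints at an induction extending Lemma~\ref{lem:conmin}. Your route---produce a comparison map $K_\pt\to H(A,n)_\pt$ via the adjunction of Lemma~\ref{lem:completion} and then invoke the rigidity of minimal complexes---is correct and pleasantly conceptual: the adjunction does the heavy lifting in dimensions above $n+1$, and the Hurewicz claim from Theorem~\ref{thm:hur} guarantees that $\partial z$ really lands in $S_{n+1}$. The paper's suggested induction would instead show directly, degree by degree, that any minimal model is \emph{isomorphic} to $\hat S_\pt$ without passing through a weak-equivalence argument; that avoids importing the rigidity theorem but is more combinatorial. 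Both approaches ultimately lean on \cite{May}, just at different points.
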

\begin{proof}
	We do not check uniqueness here explicitly, as it is not essential. A proof can be found in chapter 23 of May \cite{May}.
	Alternatively, the reader could try a proof by induction on the dimension $k$, extending the argument from lemma \ref{lem:conmin}.

	We define a minimal $(n+1)$-skeleton by 
	\begin{align*}
		H(A,n)_k 	= \{\s\} \text{ for } k < n, \quad&
		H(A,n)_n 	= A \text{ and } \\
		H(A,n)_{n+1} 	= \{(a_0,\dots,a_{n+1})\in A^{n+2}& \;|\; a_0 - a_1 + \dots \pm a_{n+1} = 0 \}
	\end{align*}
	and $s_i a = (\dots,e,a,a,e,\dots)$ for $a\in A$ and $e$ the identity in $A$. The $i$th face map in dimension $(n+1)$
	shall pick the $i$th entry of $(a_0,a_1,\dots)$: $d_i(a_0,\dots) = a_i$.
	
	As soon as we have checked that this is a Kan skeleton, we can use lemma \ref{lem:completion}.
	
	Regard an $(n,k)$-horn: $(a_0,a_1,\dots,\bel,\dots,a_{n+1})$. It is completed by $x$,
	if and only if $a_0 - a_1 \pm \dots \pm x \mp \dots = 0$.
	There is exactly one solution $x$, because $A$ is a group.
	
	Now, consider an $(n+1,k)$-horn:
	\[
		\left(
		\begin{matrix}
			a_0^0 	\\ a_1^0 \\   \vdots 	\\ a_{n+1}^0
		\end{matrix}\quad
		\begin{matrix}
			a_0^1	\\ a_1^1 \\   \vdots 	\\ a_{n+1}^1
		\end{matrix}\quad
		\dots
		\bel
		\dots\;\;
		\begin{matrix}
			a_0^{n+1}\\   \vdots \\  a_{n}^{n+1}	\\ a_{n+1}^{n+1}
		\end{matrix}\quad
		\begin{matrix}
			a_0^{n+2}\\   \vdots \\  a_{n}^{n+2}	\\ a_{n+1}^{n+2}
		\end{matrix}
		\right).
	\]
	There is a canonical completion for the $k$th column: $(a_0^k,\dots,a_{n+1}^k)$ using $a_l^k = a_{k-1}^l$ for $k>l$ 
	and $a_l^k = a_k^{l-1}$ for $k\le l$. But it is not yet clear that this satisfies the algebraic equation in the definition
	of $H(A,n)_{n+1}$. 
	If the matrix is completed fulfilling the symmetry, every entry occurs twice with different sign, 
	thus the alternating sum is zero:
	\[
		\sum_{b=0}^{n+1} \sum_{c=0}^{n+2} (-1)^{b+c}a_b^c = 0 
		\quad \Rightarrow \quad
		\sum_{b=0}^{n+1} (-1)^{b}a_b^k = (-1)^{k+1}\sum_{c=0\atop c\ne k}^{n+2} \sum_{b=0}^{n+1} (-1)^{b+c}a_b^c.
	\]
	The $d$th column satisfies the condition of $H(A,n)_{n+1}$, if and only if $\sum_{b=0}^{n+1} (-1)^{b}a_b^d = 0$. 
	Therefore, by the above equation, the new $k$th column has to satisfy the condition, because all the other columns do so.

	Now the skeleton is constructed and can be canonically extended.
	Then, the homotopy groups are trivial in all degrees but $n$. In dimension $n$ we have a canonical map
	\[
		A \to \pi_n(H(A,n)_\pt),\; a \mapsto [a],
	\]
	because all faces of $a$ are trivial. Homotopies $h\in H(A,n)_{n+1}$ have the boundary $\p h = (e,\dots,e,a,b)$ 
	but by definition this implies $a=b$. Similarly, if $h\in H(A,n)_{n+1}$
	witnesses an addition: $\p h = (\dots,e,y,xy,x)$, then $x+y=y+x=xy$.
\end{proof}

\begin{remark}
	As a by-product, the above construction could be used to calculate the (co-)homology of Eilenberg-Mac Lane spaces.
	If $A$ is finite, the rank of the corresponding (co-)chain complex in degree $k$ is the number of $k$-simplices,
	which is bounded by $|A|^{k!/n!}$. 
	For $A$ infinite, the number of simplices has the same cardinality as $A$. 
\end{remark}

\section{Spectral Cohomology}\label{sec:cohom}
\begin{definition}\label{def:muHA}
	We define a simplicial map between the Kan complexes
	\[
		\mu: H(A,n)_\pt \times H(A,n)_\pt \to H(A,n)_\pt,
	\]
	making $H(A,n)_k$ an abelian group for all $k$. $\mu$ is defined in the $n$th dimension by:
	\[
		\mu(a,b) = a\cdot b \text{ for } a,b\in H(A,n)_n=A.
	\]
	In the lower dimensions $\mu$ is trivial. It can be uniquely extended to all higher dimension,
	since $H(A,n)_\pt$ is defined to be a canonical completion. The compatibility with $\p$ implies
	\[
		\mu((a_0,\dots),(b_0,\dots)) = (\mu(a_0,b_0),\dots) \text{ for } (a_0,\dots),(b_0,\dots)\in H(A,n)_k, k>n,
	\]
	because the face operators in the product are $d_i\times d_i$.
\end{definition}

\begin{definition}
	We define the covariant spectral cocycle functor $Z_\text{spec}^n: \sPair \to \Ab$ 
	in dimension $n\in \N_0$ with coefficients in $A\in\Ab$ by
	\[
		Z_\text{spec}^n(K_\pt,L_\pt;A) := \Hom_\sPair \left( (K_\pt,L_\pt), (H(A,n),\s) \right),
	\]
	with the group-structure induced by $\mu$: $\ga + \gb := \mu\circ(\ga \times \gb)$.
\end{definition}

\begin{definition}\label{def:nullhom}
	A simplicial map of pairs $f:(K_\pt,K_\pt')\to (L_\pt,L_\pt')$ is called nullhomotopic, if it can be extended
	to a map of pairs $f':( K_\pt \times \vD_\pt^1,K_\pt' \times \vD_\pt^1) \to (L_\pt,L_\pt')$, with $f_{|K\times [0]}' = f$
	and $f'(K_\pt \times [1]_\pt) = \s$.
	For a simplicial set $K_\pt$, its cone $(CK_\pt,\s)$ is: 
	\[
		CK_\pt := K_\pt \times \vD_\pt^1\big/ K_\pt \times [1]_\pt.
	\]
	Then $f$ is nullhomotopic, if it can be extended to some $f':(CK_\pt,CK_\pt',\s)\to(L_\pt,L_\pt',\s)$.
\end{definition}

\begin{remark}
	The details of these definitions are not hard to check, however the reader is asked to think about them for a moment.
	
	
	After applying $|.|$ to $CK_\pt$ one really gets the cone over $K_\pt$ and 
	therefore the realization of a nullhomotopic map is homotopic to the constant map.
\end{remark}

\begin{lemma}\label{lem:nullhom}
	A simplicial map $f:(K_\pt,K_\pt')\to (L_\pt,L_\pt')$ is nullhomotopic if and only if there is a sequence of maps 
	$g:(K_n,K_n')\to (L_{n+1},L_{n+1}')$ such that $d_ig(x) = g(d_ix)$ for $i\in [n]$ and $d_{n+1}g(x)=f(x)$, where $x\in K_n$.
\end{lemma}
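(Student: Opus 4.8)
The plan is to unwind the definition of ``nullhomotopic'' in Definition~\ref{def:nullhom}. A nullhomotopy of $f$ is an extension $f'$ to the cone $CK_\pt = K_\pt \times \vD^1_\pt / K_\pt \times [1]_\pt$, so the whole content is to understand the non-degenerate simplices of $K_\pt \times \vD^1_\pt$ and what it means to specify a simplicial map on them compatibly. Recall $\vD^1_n$ consists of the non-decreasing sequences in $\{0,1\}$, i.e.\ $(1,\dots,1,0,\dots,0)$ with $j$ ones and $n+1-j$ zeros for $j = 0,\dots,n+1$; write $t_j^{(n)}$ for this element. A simplex of $(K\times\vD^1)_n$ is a pair $(x, t_j^{(n)})$, and a simplicial map out of $K_\pt\times\vD^1_\pt$ is determined by its values on such pairs subject to the face/degeneracy relations.

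First I would set up the correspondence between the two data. Given a nullhomotopy $f'$, define $g\colon K_n \to L_{n+1}$ by $g(x) := f'\big(s_0 x,\ t_1^{(n+1)}\big)$ — that is, the value of $f'$ on the $(n+1)$-simplex of $K\times\vD^1$ which is $s_0 x$ in the $K$-coordinate and has exactly one ``$1$'' at the front. Conversely, given a family $(g_n)$ with the stated properties, I would define $f'$ on an arbitrary simplex $(x, t_j^{(n)}) \in (K\times\vD^1)_n$ by cases: if $j=0$ (the sequence is all zeros, hence the simplex lies in $K_\pt\times[0]_\pt$) set $f'(x,t_0^{(n)}) := f(x)$; if $j = n+1$ (all ones, the simplex lies in $K_\pt\times[1]_\pt$) set it to $\s$; and for $0 < j \le n$ one checks that $(x, t_j^{(n)})$ is degenerate in the $\vD^1$-coordinate, so write it as a degeneracy of something one dimension down involving the ``first corner'' $t_1$, and propagate using the fact that $g$ is the value on that corner. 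Concretely the shuffle decomposition expresses $(x,t_j^{(n)})$ in terms of $s_\bullet$ applied to $g$-type data, forcing the definition.

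The step I expect to be the main obstacle is verifying that both assignments are well defined simplicial maps, i.e.\ checking the face relations at the ``interface'' between the $j=0$ region and the $j\ge 1$ region. This is exactly where the condition $d_{n+1}g(x) = f(x)$ is needed: the bottom face $d_{n+1}$ of the prism simplex carrying $g(x)$ must land on the copy of $f(x)$ in $K\times[0]$, while the relations $d_i g(x) = g(d_i x)$ for $i\le n$ ensure compatibility among the prism simplices themselves, and degeneracies in the $\vD^1$-direction handle the collapse onto $\s$. I would organize this by using the standard prism (shuffle) decomposition $K_\pt\times\vD^1_\pt$ into the $n+1$ non-degenerate $(n+1)$-simplices over each non-degenerate $n$-simplex of $K_\pt$, noting that modulo $K\times[1]$ all but the ``first'' of these become degenerate or collapse, so a map on the cone is pinned down by its value on that first prism simplex, which is precisely $g$. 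Compatibility with the subpair $(K',L')$ is automatic since $g$ restricts to $K'_n\to L'_{n+1}$. Checking these relations is a routine but slightly tedious simplicial-identity computation, and I would present it by tracking a single prism simplex and its faces rather than writing out all cases.
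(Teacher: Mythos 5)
Your overall strategy --- extract $g$ by evaluating the nullhomotopy on a distinguished prism simplex over each $x$, and conversely assemble the nullhomotopy from $g$ by explicit degeneracy formulas on the shuffle decomposition of $K_\pt\times\vD_\pt^1$ --- is the same as the paper's (the paper's own proof is only a sketch that defers the verification). But the step you defer is exactly where the argument breaks, in the ``only if'' direction. First fix the orientation: with the paper's convention that $\vD^1_m$ consists of non-decreasing sequences $(0,\dots,0,1,\dots,1)$, the prism simplex whose $(n+1)$st face is $x\times[0]$ is $(s_nx,(0,\dots,0,1))$, not your $(s_0x,(1,0,\dots,0))$; your choice yields $d_0g(x)=f(x)$ rather than the required $d_{n+1}g(x)=f(x)$. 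More seriously, even with the corrected extraction $g(x):=f'\bigl(s_nx,(0,\dots,0,1)\bigr)$ the identity $d_ng(x)=g(d_nx)$ fails: one computes
\[
	d_n\bigl(s_nx,(0,\dots,0,1)\bigr)=\bigl(x,(0,\dots,0,1)\bigr),
	\qquad
	g(d_nx)=f'\bigl(s_{n-1}d_nx,(0,\dots,0,1)\bigr),
\]
and the ``diagonal'' face $(x,(0,\dots,0,1))$ and the ``vertical'' face $(s_{n-1}d_nx,(0,\dots,0,1))$ are different simplices of $K_\pt\times\vD_\pt^1$, on which a general nullhomotopy need not agree. Indeed no single $(n+1)$-simplex of the prism over $x$ has faces $g(d_0x),\dots,g(d_nx),f(x)$, so this direction cannot be pure bookkeeping: one must either use the Kan property of $L_\pt$ and build $g$ inductively by horn-filling (the known faces being the already-constructed $g(d_ix)$, the simplex $f(x)$, and suitable values of $f'$), or set up an honest comparison with the path space $P(L,\s)_\pt$ of remark \ref{rem:loopspace}, as the paper hints.

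In the ``if'' direction there is a smaller but real error of principle: you claim that $(x,t_j^{(n)})$ for $0<j\le n$ ``is degenerate in the $\vD^1$-coordinate, so write it as a degeneracy of something one dimension down,'' and later that all but one prism simplex per $x$ ``become degenerate or collapse'' in the cone. A product simplex $(x,y)$ is degenerate only when $x$ and $y$ are degenerate along a common index, so over a non-degenerate $x$ all $n+1$ prism simplices survive non-degenerately in $CK_\pt$, and a map on the cone is \emph{not} determined by its value on one of them. What works (and what the paper does) is to \emph{define} the value there by fiat as a degeneracy of $g$ applied to a face of $x$, e.g. $f'(x\times(0^k1^{m-k})):=s_{m-1}\cdots s_k\,g(d_k\cdots d_{m-1}x)$, and then check the simplicial identities directly. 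Finally, note that the hypothesis on $g$ should also include $d_0\cdots d_ng(x)=\s$; your construction silently imposes $f'(K_\pt\times[1]_\pt)=\s$, which is not forced by the stated conditions (already for $K_\pt=\vD^0$ nothing makes the edge $g(v)$ end at the basepoint).
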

\begin{proof}
	For the only if part take a nullhomotopy $h$ and set $g(x):=h((x,[0,\dots,0,1]))$.
	
	For the if part define $h(x\times [0,\dots,0]) := f(x)$ and 
	\[
		h(x \times [0,\dots,0,1,\dots,1]) = s_{n-1}\dots s_{k} g(d_k \dots d_{n-1} x)
	\]
	where there are $k$ zeros and $n-k\ge 1$ ones in $[0,\dots,0,1,\dots,1]$ and $x\in K_{n-1}$.
	
	Here several details have to be checked by the reader. An alternative is using the path space from remark \ref{rem:loopspace}
	and defining $g$ as a simplicial map $K_\pt \to PL_\pt$ to the path space. 
\end{proof}

\begin{definition}
	The spectral coboundary $B_\text{spec}^n(K_\pt,L_\pt) \subset Z_\text{Sp}^n(K_\pt,L_\pt)$ 
	is defined to be the image of $Z_\text{spec}^n(i) : Z_\text{spec}^n(CK_\pt,CL_\pt)\to Z_\text{spec}^n(K_\pt,L_\pt)$, 
	where $i$ is the inclusion. 
	
	Now, we can define the spectral cohomology functor $H_\text{spec}^n: \sPair \to \Ab$ in dimension $n\in \N_0$ with coefficients in $A\in\Ab$ by
	\[
		H_\text{spec}^n(K_\pt,L_\pt;A) := Z_\text{spec}^n(K_\pt,L_\pt;A) \big/ B_\text{spec}^n(K_\pt,L_\pt;A).
	\]
\end{definition}

\begin{lemma}\label{lem:HAgroup}
	The above construction works as intended and the coboundaries are exactly those cocycles which are nullhomotopic.
\end{lemma}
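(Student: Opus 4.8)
The statement has two parts: that the construction is well-defined (i.e. $B^n_{\text{spec}}$ is actually a subgroup of $Z^n_{\text{spec}}$, so the quotient makes sense and is a group, and $H^n_{\text{spec}}$ is functorial), and the characterization of the coboundaries as precisely the nullhomotopic cocycles. The plan is to dispatch the first part quickly using what is already in place, and to spend the effort on the second part, which is where Lemma \ref{lem:nullhom} does the real work.

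First I would observe that $Z^n_{\text{spec}}(K_\pt,L_\pt;A) = \Hom_{\sPair}((K_\pt,L_\pt),(H(A,n),\s))$ is an abelian group under $\ga + \gb := \mu \circ (\ga \times \gb)$: commutativity, associativity and the existence of inverses all follow from the fact that $\mu$ makes each $H(A,n)_k$ an abelian group (definition \ref{def:muHA}), checked levelwise, and the neutral element is the constant map to $\s$. Since the cone construction and the inclusion $i \: (K_\pt,L_\pt) \to (CK_\pt,CL_\pt)$ are functorial, $Z^n_{\text{spec}}(i)$ is a group homomorphism, so its image $B^n_{\text{spec}}(K_\pt,L_\pt)$ is genuinely a subgroup and the quotient $H^n_{\text{spec}}$ is a well-defined abelian group. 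Functoriality of $H^n_{\text{spec}}$ in $(K_\pt,L_\pt)$ then follows because any map of pairs $f$ induces a compatible map on cones, so $f^*$ carries coboundaries to coboundaries.

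Now for the characterization. A cocycle $\ga \in Z^n_{\text{spec}}(K_\pt,L_\pt)$, by definition, is a coboundary iff it lies in the image of $Z^n_{\text{spec}}(i)$, i.e. iff $\ga = \gb \circ i$ for some $\gb \: (CK_\pt,CL_\pt,\s) \to (H(A,n),\s)$. But by definition \ref{def:nullhom}, an extension of $\ga$ over the cone is exactly a nullhomotopy of $\ga$. So ``$\ga$ is a coboundary'' literally unwinds to ``$\ga$ is nullhomotopic'', and the content is really just verifying that the cone-based notion of nullhomotopy in definition \ref{def:nullhom} agrees with the one phrased via $\vD^1$ — which is built into that definition — and that a map extends over $CK_\pt$ relative to the subcomplex $CL_\pt$ iff it extends as a map of pairs, which is immediate from the cone being a quotient. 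If one prefers the more hands-on description, Lemma \ref{lem:nullhom} restates nullhomotopy of $f \: (K_\pt,K_\pt') \to (L_\pt,L_\pt')$ as the existence of maps $g \: (K_n,K_n') \to (L_{n+1},L_{n+1}')$ with $d_i g(x) = g(d_i x)$ for $i \le n$ and $d_{n+1} g(x) = f(x)$; I would cite this directly rather than redo it.

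The only step requiring genuine care — and what I expect to be the main obstacle — is checking that $\gb$ can be arranged to respect the subcomplexes, i.e. that a nullhomotopy of $\ga$ as an unpointed map can be upgraded to one respecting $(CL_\pt, \s)$. Here one uses that $H(A,n)_\pt$ is a \emph{canonical} completion (lemma \ref{lem:completion}): once $\gb$ is defined correctly in degrees $\le n+1$, its values in higher degrees are forced by compatibility with the face operators, so there is nothing left to choose and the subcomplex condition propagates automatically. Thus the whole lemma reduces to a degree-by-degree bookkeeping through dimension $n+1$, exactly as in Lemma \ref{lem:nullhom}, which I would leave to the reader after indicating this reduction.
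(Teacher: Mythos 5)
Your proposal is correct and follows essentially the same route as the paper: the group structure on $\Hom_\sPair((K_\pt,L_\pt),(H(A,n),\s))$ comes levelwise from $\mu$, the induced maps $f^*$ are homomorphisms compatible with the cone construction, and the identification of coboundaries with nullhomotopic cocycles is the tautological unwinding of the definitions, since a coboundary is by definition a restriction along $i$ of a map on $(CK_\pt,CL_\pt)$ and definition \ref{def:nullhom} declares exactly such an extension to be a nullhomotopy. Your final worry about upgrading an unpointed nullhomotopy to one respecting $(CL_\pt,\s)$ is a non-issue, because both $B_\text{spec}^n$ and the cone formulation of nullhomotopy are already phrased entirely in terms of maps of pairs.
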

\begin{proof}
	It is easily seen that $\Hom_\sPair \left( (K_\pt,L_\pt), (H(A,n),\s) \right)$ is an abelian group.
	For a map of pairs $f:(K_\pt,L_\pt) \to (K_\pt',L_\pt')$ we get an induced map:
	\[
		f^*:\Hom_\sPair \left( (K_\pt',L_\pt'), (H(A,n),\s) \right) \to \Hom_\sPair \left( (K_\pt,L_\pt), (H(A,n),\s) \right)
		\text{ by }
		f^*(\ga) = \ga \circ f.
	\]
	This is a group homomorphism because $f$ acts from the right and $\mu$ from the left side:
	\[
		f^*(\ga + \gb)
		= \left(\mu \circ (\ga \times \gb)\right)\circ f
		= \mu \circ \left((\ga \times \gb)\circ f\right)
		= \mu \circ (f^*\ga \times f^*\gb)
		= f^*\ga + f^*\gb
	\]
	
	A coboundary is of the form $i^*\ga = \ga \circ i = \ga_{|(K_\pt,L_\pt)}$ and thus it is the restriction of a map on the cone
	$\ga:  (CK_\pt,CL_\pt)\to(H(A,n),\s)$. Therefore it is clear that a cocycle is a coboundary, if and only if it is nullhomotopic.
	
	The induced map $f^*$ is well-defined on the quotient $H=Z/B$, since we have $f^*B(K_\pt',L_\pt') \subset B(K_\pt,L_\pt)$,
	as for $i^{\prime *}\ga \in B(K_\pt',L_\pt')$ we have $(Cf)^*(i^{\prime *}\ga) = i^{*}(f^*\ga) \in B(K_\pt,L_\pt)$.
\end{proof}

\begin{theorem}\label{thm:sim=spec}
	For any simplicial pair $(K_\pt,L_\pt)$ we have natural isomorphisms:
	\[
		Z_\text{spec}^n(K_\pt,L_\pt) \cong Z_\text{sim}^n(K_\pt,L_\pt) 
		\text{ and }
		B_\text{spec}^n(K_\pt,L_\pt) \cong B_\text{sim}^n(K_\pt,L_\pt).
	\]
	Therefore both homology theories are equivalent:
	\[
		H_\text{spec}^n(K_\pt,L_\pt) \cong H_\text{sim}^n(K_\pt,L_\pt). 
	\]
\end{theorem}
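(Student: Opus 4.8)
The plan is to make both the spectral cocycles and the spectral coboundaries completely explicit, observe that they coincide with the simplicial ones, and then deduce the cohomology isomorphism formally.

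I would first unwind what a map of pairs $\ga\colon(K_\pt,L_\pt)\to(H(A,n),\s)$ actually is. Since $H(A,n)_\pt$ is the canonical completion (Lemma~\ref{lem:completion}) of an $(n+1)$-skeleton that is trivial below dimension $n$, equals $A$ in dimension $n$, and consists of the tuples with vanishing alternating sum in dimension $n+1$, the completion adjunction identifies $\ga$ with a map of $(n+1)$-skeletons; as everything below dimension $n$ is forced to $\s$, this is just a function $\ga_n\colon K_n\to A$, and compatibility with the operators amounts to three conditions: $\ga_n$ vanishes on $L_n$; $\ga_n$ vanishes on degenerate $n$-simplices (forced by compatibility with the degeneracies, since $H(A,n)$ is trivial in dimension $n-1$); and for every $z\in K_{n+1}$
\[
	\sum_{i=0}^{n+1}(-1)^i\ga_n(d_iz)=0 ,
\]
which is exactly the simplicial cocycle equation $\de^n\ga_n=0$. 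So $Z^n_\text{spec}$ is the group of simplicial $n$-cocycles that annihilate degenerate simplices, and I would then check that this identification is additive — the multiplication $\mu$ of Definition~\ref{def:muHA} is, in degree $n$, the addition of $A$ — and natural in the pair.

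For the coboundaries I would use Lemma~\ref{lem:HAgroup} (a spectral cocycle is a coboundary iff it is nullhomotopic) together with Lemma~\ref{lem:nullhom}: a nullhomotopy of $\ga$ is a family of maps $g\colon(K_m,L_m)\to(H(A,n)_{m+1},\s)$ with $d_ig(x)=g(d_ix)$ and $d_{m+1}g(x)=\ga(x)$. Only dimensions $n-1$ and $n$ carry information; setting $\gb:=g_{n-1}\colon K_{n-1}\to A$, the dimension-$n$ requirement forces $g_n(x)=(\gb d_0x,\dots,\gb d_nx,\ga(x))$, and this lies in $H(A,n)_{n+1}$ precisely when $\ga(x)=\pm(\de^{n-1}\gb)(x)$ for all $x\in K_n$. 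Hence $B^n_\text{spec}$ is exactly $\de^{n-1}$ applied to the normalized $(n-1)$-cochains, i.e.\ the simplicial coboundaries; the data of $g$ in dimensions above $n$ is then forced by the completion, so imposes nothing new.

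Combining the two identifications, the quotients agree and one obtains the natural isomorphism $H^n_\text{spec}\cong H^n_\text{sim}$, automatically compatible with the Eilenberg--Steenrod structure since every step above is natural. The hard part will be the bookkeeping around degenerate simplices: a simplicial map into $H(A,n)$ necessarily kills them, so the comparison genuinely takes place at the level of the normalized cochain complex, and to pass back to the unnormalized complex of Section~\ref{sec:homology} one must invoke (or reprove) that the inclusion of the normalized cochains is a chain homotopy equivalence, so that cohomology is unaffected. Getting these degeneracies right, and verifying that the explicit descriptions above are group isomorphisms natural in $(K_\pt,L_\pt)$, is where essentially all the work sits; everything else is formal.
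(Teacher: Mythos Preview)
Your approach is essentially the same as the paper's: identify spectral cocycles via the completion adjunction (Lemma~\ref{lem:completion}) with cochains satisfying the simplicial cocycle equation, and identify spectral coboundaries via Lemmas~\ref{lem:HAgroup} and~\ref{lem:nullhom} with images of the codifferential. You are in fact more careful than the paper about degenerate simplices --- the paper simply asserts $C_\text{sim}^n \cong \Hom_{\sSk_n}(R_n(K_\pt,L_\pt),R_n(H(A,n),\s))$ without noting that the right-hand side is really the normalized cochains, so your observation that one must invoke the normalized/unnormalized equivalence to recover the statement for $Z_\text{sim}^n$ and $B_\text{sim}^n$ as defined in Section~\ref{sec:homology} is a genuine point the paper leaves implicit.
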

\begin{proof}
	Let $(K_\pt,L_\pt)$ be any simplicial pair.
	There is a natural isomorphism:
	\[
		\gp: C_\text{sim}^n(K_\pt,L_\pt;A) \cong \Hom_\Set(K_n\big/L_n,A) \cong \Hom_{\sSk_n}(R_n(K_\pt,L_\pt),R_n(H(A,n)_\pt,\s)).
	\]
	This is works, since the skeleton $R_n(H(A,n)_\pt)$ is nearly trivial.
	
	\noindent
	\textit{Claim:} $\gp(\ga)$ can be extended to $(K_{n+1},L_{n+1})$ if and only if $\ga$ is a (simplicial) cocycle.
	
	A simplicial extension $f:R_{n+1}(K_\pt,L_\pt) \to R_{n+1}(H(A,n)_\pt,\s)$ of $\gp(\ga)$ 
	has to satisfy:
	\[
		\p f(x)= f^{\times (n+2)}(\p x) = (\ga(d_0 x),\dots, \ga(d_{n+1} x)) \text{ for } x \in K_{n+1}.
	\]
	This determines $f$ completely. The right hand side is only a simplex, if 
	\[
		(\de^*\ga)(x) = \ga(\de x) = \ga(d_0 x)- \ga(d_1 x) \pm \dots \mp \ga(d_{n+1} x) = 0.
	\]
	As we require this for all $x$, $f$ can indeed be defined if and only if  $\de^*\ga = 0$, so exactly if $\ga$ is a cocycle.
	Thus the claim holds and the completion property of $H(A,n)_\pt$ yields:
	\begin{align*}
		Z_\text{sim}^n(K_\pt,L_\pt;A) &\cong \Hom_{\sSk_{n+1}}(R_{n+1}(K_\pt,L_\pt),R_{n+1}(H(A,n)_\pt,\s)) \\
		&\cong  \Hom_{\sPair}((K_\pt,L_\pt),(H(A,n)_\pt,\s) = Z_\text{spec}^n(K_\pt,L_\pt;A).
	\end{align*}
	
	\noindent
	\textit{Claim:} $\gp(\ga)$ is nullhomotopic if and only if $\ga$ is a (simplicial) coboundary.
	
	We use lemma \ref{lem:nullhom} for this. Regard some map $g:K_k \to H(A,n)_{k+1}$, 
	with $d_ig(x) = g(d_ix)$ and $d_{n+1}g(x)=f(x)$. Then one can define 
	\[
		\gb \in C_\text{sim}^{n-1}(K_\pt,L_\pt;A) = \Hom_\Set(K_{n-1}/L_{n-1},A) \text{ by } \gb(x) := g(x) \in A.
	\]
	For any $y\in K_{n}$ we get 
	\begin{align*}
		g(y) &= (d_0g(y),\dots) = (g(d_0 y),\dots,g(d_n y),f(y)) = (\gb(d_0y),\dots,\gb(d_ny),\ga(y)).
	\end{align*}
	This is a simplex if and only if $\gb(d_0y) - \gb(d_1y) \pm \dots (-1)^{n+1}\ga(y) = 0$, 
	or equivalently $(\de^*\gb)(y) = \gb(\de y) = (-1)^n \ga(y)$. Since this holds for all $y$,
	we get that $\ga = \de^* (-1)^n \gb$ is a coboundary if and only if such a $g$ exists in dimension $k=n$. 
	In the other dimensions $g$ can be extended canonically.
	\\
	\\
	This proves the claim, which then implies the equality of the coboundary sets.
\end{proof}

\begin{remark}
	For a topological space $X$ its spectral cohomology is defined by 
	\[
		H_{spec}^k(X,Y;A) := [(X,Y),(H,\s)] = \Hom_\Top((X,Y),(H,\s))/\sim
	\]
	where $H$ is a topological Eilenberg-Mac Lane space with $\pi_k^\Top(H,\s) \cong A$
	and $\sim$ is the topological homotopy of maps. 
	Since the homotopy groups commute with the topological realization, we can choose $H = |H(A,k)_\pt|$.
	Using the strong statements $X\sim |S_\pt(X)|$ and $K_\pt \sim S_\pt(|K_\pt|)$ from theorem \ref{thm:|S|} we have 
	(also using adjointness of $S_\pt$ and $|.|$):
	\begin{align*}
		H_{spec}^k(X;A) &= \Hom_\Top(X,|H(A,k)|)/\!\!\sim\;\;\cong \Hom_\Top(|S_\pt(X)|,|H(A,k)|)/\!\!\sim \\
				&\cong \Hom_\sSet(S_\pt(X),S_\pt(|H(A,k)|))/\!\!\sim\;\; \cong \Hom_\sSet(S_\pt(X),H(A,k))/\!\!\sim \\
				&= H_{spec}^k(S_\pt(X);A).
	\end{align*}
	Since we also now that $H_{sing}^k(X;A) = H_{sim}^k(S_\pt(X);A)$ by definition, 
	we can use theorem \ref{thm:sim=spec} to show that for every topological pair $(X,Y)$ there is a natural isomorphism:
	\[
		H_{spec}^k(X,Y;A) \cong H_{sing}^k(X,Y;A).
	\]
\end{remark}

\vskip4mm


\end{document}